\begin{document}


\title[Conditioning of Gaussian processes]{Conditioning of Gaussian processes and \\ a zero area Brownian bridge}

\author{Maik G\"orgens}
\address{Department of Mathematics, Uppsala University}
\curraddr{P.O. Box 480, 751 06 Uppsala, Sweden}
\email{gorgens.maik@gmail.com}

\date{\today}

\begin{abstract}
  We generalize the notion of Gaussian bridges by conditioning Gaussian processes given that certain linear functionals of the sample paths vanish. We show the equivalence of the laws of the unconditioned and the conditioned process and by an application of Girsanov's theorem, we show that the conditioned process follows a stochastic differential equation~(SDE) whenever the unconditioned process does. In the Markovian case, we are able to determine the coefficients in the SDE of the conditioned process explicitly. Our main example is Brownian motion on~$[0,1]$ pinned down in~$0$ at time~$1$ and conditioned to have vanishing area spanned by the sample paths.
\end{abstract}

\keywords{Gaussian processes, Conditioning, Brownian bridge, Series expansions}

\maketitle


\section{Introduction}

Let $X = (X_s)_{s \in [0,T]}$ be a Gaussian process with values in the space of continuous functions $C([0,T])$ and assume $\E X_s = 0$ for all $s \in [0, T]$. Let $A$ be a finite set of linear functionals on $C([0,T])$. In this work we consider the conditioned process of $X$ given that the linear functionals in $A$ acting on $X$ vanish. 
The conditioned process is denoted by $X^{(A)} = (X^{(A)}_s)_{s \in [0,T]}$. A formal definition is given in Section~\ref{S:FORMAL_DEF}.



A well studied example is that of Gaussian bridges (see for example \cite{Gas04} and~\cite{Bau07}). In this case the set $A$ only consists of the element $\delta_T$, where $\delta_T$ denotes point evaluation of a function at point $T$. For example the standard linear Brownian motion on $[0,1]$ conditioned to have $W_1 = 0$ (i.e., $A = \{ \delta_1 \}$) yields the Brownian bridge $B$ on $[0,1]$. An anticipative representation of $B$ is
\equ[E:INTRO_GB_1]{ B_s = W_s - s W_1, \quad 0 \leq s \leq 1, }
and a non-anticipative representation (i.e., adapted to the natural filtration of $W$) of $B$ is
\equ[E:INTRO_GB_2]{ dB_s = dW_s - \frac{B_s}{1-s} \, ds, \quad B_0 = 0, \quad 0 \leq s < 1. }
The present work generalizes the setting of Gaussian bridges by allowing several and more general conditions. Our main example (studied in Section~\ref{SS:ZABB}) is the Brownian motion $W$ conditioned to have $W_1 = 0$ and $I_1 = \int_0^1 W_x \, dx = 0$ (i.e., $A = \{ \delta_1, a_0 \}$ with $a_0(f) = \int_0^1 f(x) \, dx$, $f \in C([0,1])$). We call the conditioned process the \emph{zero area Brownian bridge} and denote it by $M$. Figure~\ref{FIG:1} shows a typical path of $M$.
\begin{figure}
  \includegraphics[width=1\textwidth]{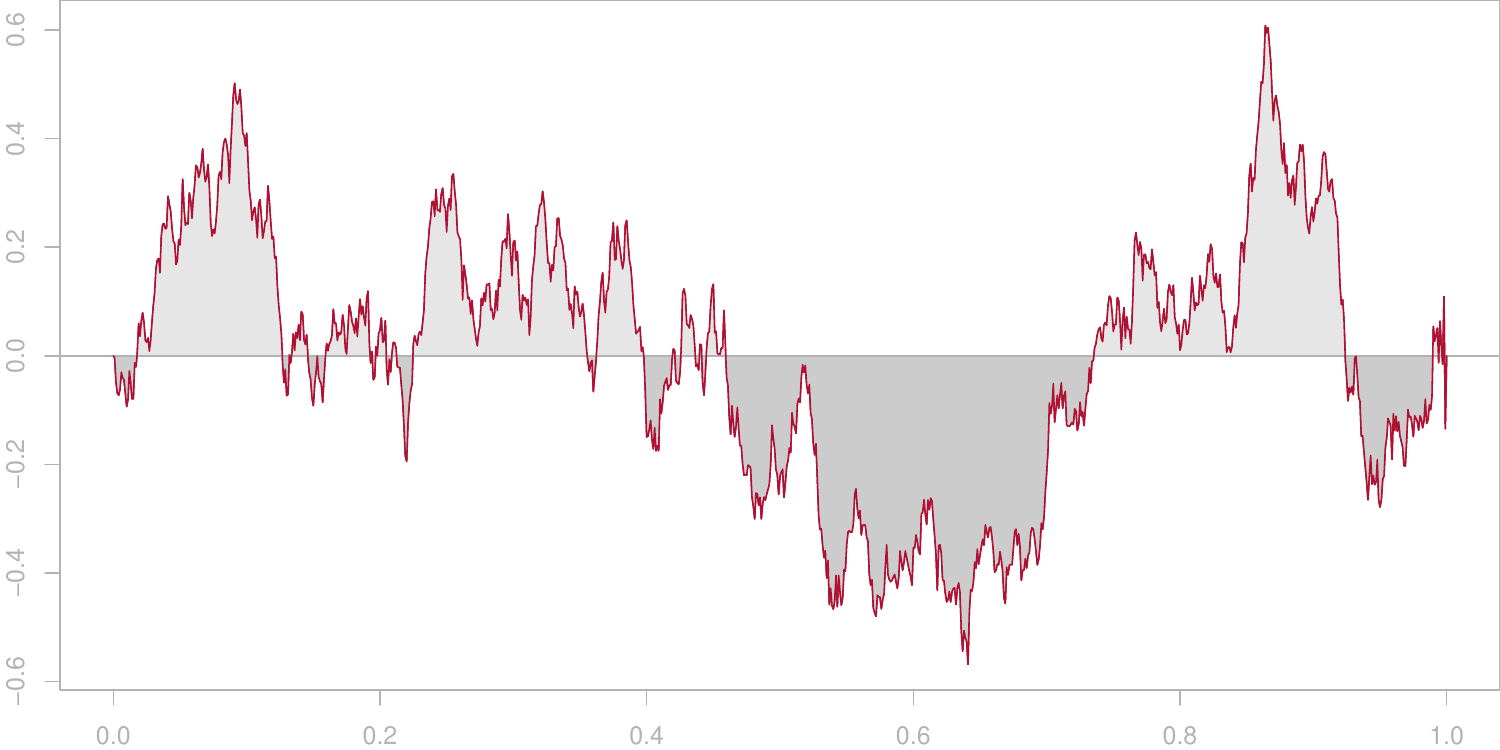}
  \caption{A realization of a zero area Brownian bridge.}
  \label{FIG:1}
\end{figure}
An anticipative representation of $M$ (corresponding to~\eqref{E:INTRO_GB_1} for $B$) is 
\[ M_s = W_s - s(3s - 2) W_1 - 6 s (1-s) I_1, \quad 0 \leq s \leq 1, \]
and a non-anticipative representation of $M$ (corresponding to~\eqref{E:INTRO_GB_2} for $B$) is
\[ dM_s = dW_s - \frac{4M_s}{1-s} ds - \frac{6 J_s}{(1-s)^2} ds, \quad M_0 = 0, \quad 0 \leq s < 1, \]
where $J_s = \int_0^s M_x \, dx$. In particular, the two dimensional process $(M_s, J_s)_{s \in [0,1]}$ is a time-inhomogeneous Markov process. 

On earlier work on conditioned Gaussian processes we mention~\cite{Ali02} and~\cite{Sot14}. In these articles very similar settings as in our work were studied and the resulting processes were called ``generalized Gaussian bridges''. Anticipative as well as non-anticipative representations were given. However, we believe that this paper gives further insights into the theory of conditioned Gaussian processes. In particular, we obtain the non-anticipative representations in a very intuitive way which allows for very explicit calculations. 

We fix some notations and introduce the conditioned process properly. Then we state the main results of the paper.

\subsection{Notations and definition of the conditioned process}\label{S:FORMAL_DEF}

Let $C([0,T])$ be the space of continuous functions on $[0,T]$ equipped with the supremum norm
\[ \|f\|_\infty = \sup_{0 \leq s \leq T} |f(s)|, \qquad f \in C([0,T]). \]
Then $(C([0,T]), \|\cdot\|_\infty)$ becomes a separable Banach space. For a continuous function $f \in C([0,T])$ and an element $a \in C([0,T])^*$ we write $a(f)$ for the evaluation map. Let $\mathcal{C}$ denote the Borel $\sigma$-algebra on $C([0,T])$. The dual space $C([0,T])^*$ of $C([0,T])$ can be identified with the space of signed finite Borel measure on $[0,T]$ (see Appendix~C in~\cite{Con85}). We use the notation $a(f)$ and $\int f(s) \, a(ds)$ interchangeably. In particular, we use the second form if the integration only runs over a subset of $[0,T]$. By $\delta_s$, $s \in [0,T]$, we denote the point evaluation at point $s$, i.e., $\delta_s(f) = f(s)$, $f \in C([0,T])$. For $0 \leq s \leq T$, let $\F_s \subset \mathcal{C}$ be the smallest $\sigma$-algebra on $C([0,T])$ such that all $\delta_r$, $0 \leq r \leq s$, are $\F_s$-$\B(\R)$-measurable, where $\B(\R)$ is the Borel $\sigma$-algebra on $\R$. Note that $\F_T = \mathcal{C}$.

Let $X = (X_s)_{s \in [0,T]}$ be a continuous Gaussian process defined on a probability space $(\Omega, \A, \Prob)$. Assume $\E X_s = 0$ for all $s \in [0,T]$ and let $R_X: [0,T] \times [0,T] \rightarrow \R$ be the covariance function of $X$, $R_X(s,t) = \E X_s X_t$. A \emph{condition} for $X$ is an element $a \in C([0,T])^*$ and $X$ \emph{fulfills the condition} $a$ if $a(X) = 0$, almost surely. Let $A \subset C([0,T])^*$ be a finite set of conditions. We define a probability measure $\Prob_X^{(A)}$ on $(C([0,T]), \mathcal{C})$ by
\equ[E:INTRO_2]{ \Prob_X^{(A)}(F) = \Prob_X\left(F \ \Big| \bigcap_{a \in A} a^{-1}(0) \right), \quad F \in \mathcal{C}, }
where $\Prob_X$ is the induced measure of $X$ on $(C([0,T]), \mathcal{C})$.

Note that the fact that we condition by an event of probability zero in~\eqref{E:INTRO_2} does not raise a problem: define the set $\mathcal{C}' \subset \mathcal{C}$ by
\[ \mathcal{C}' = \bigcup_{n \in \N } \quad \bigcup_{b_1, \ldots, b_n \in C([0,T])^*} \sigma\{ b_1, \ldots, b_n \}, \]
where $\sigma\{ b_1, \ldots, b_n \} \subset \mathcal{C}$ is the smallest $\sigma$-algebra which makes the functionals $b_1, \ldots, b_n \in C([0,T])^*$ measurable. If we consider $\Prob_X^{(A)}$ only on $\mathcal{C}'$, then $\Prob_X^{(A)}$ is well-defined since conditioning on the event that the Gaussian random variables $a(X)$ vanish for all $a \in A$ becomes orthogonal projection in $\R^n$ (see also Section~9.3 in~\cite{Jan97}). The set $\mathcal{C}'$ is a ring and $\Prob_X^{(A)}$ a pre-measure on $\mathcal{C}'$. Hence, by Carath\'eodory's extension theorem (see Theorem~1.53 in~\cite{Kle14}), $\Prob_X^{(A)}$ extends in a unique way to a probability measure on $\mathcal{C}$ -- the $\sigma$-algebra generated by $\mathcal{C}'$. (The existence and uniqueness of the extension of $\Prob_X^{(A)}$ from $\mathcal{C}'$ to $\mathcal{C}$ also follows from Theorem~\ref{T:COND_PROC}).


A continuous Gaussian process $X^{(A)} = (X^{(A)}_s)_{s \in [0,T]}$ defined on a probability space $(\Omega', \A', \Prob')$ is a \emph{conditioned process of $X$ with respect to the set of conditions $A$} if its induced measure $\Prob_{X^{(A)}}$ on $(C([0,T]), \mathcal{C})$ coincides with $\Prob_X^{(A)}$. The conditioned process is thus only defined in law.

%

\subsection{Main results}\label{SS:MAIN_RES}

Let $N$ be the number of conditions in $A$. In Section~\ref{S:SERIES} we will introduce a separable Hilbert space $H$ and a linear and bounded operator $u: H \rightarrow C([0,T])$ such that
\equ[E:INTRO_3]{ X = \sum_{i=1}^N \omega_i (u e_i) + \sum_{j=1}^\infty \omega_j' (u f_j) \quad \text{and} \quad X^{(A)} = \sum_{j=1}^\infty \omega_j' (u f_j) }
in law for sequences $(e_i)_{i=1}^N \subset H$ and $(f_j)_{j=1}^\infty \subset H$ such that $\{ e_1, \ldots, e_N, f_1, f_2, \ldots \}$ forms an orthonormal basis in $H$, and sequences of independent standard normal random variables $(\omega_i)_{i=1}^N$ and $(\omega'_j)_{j=1}^\infty$. Based on these series expansions we find basic properties of the conditioned process. In particular its covariance structure (Proposition~\ref{P:COND_COVARIANCE}) and an anticipative representation (Theorem~\ref{T:ANTI}).

Let $(e_i)_{i=1}^N \subset H$ and $(f_j)_{j=1}^\infty \subset H$ be as in~\eqref{E:INTRO_3} and let $H^{(A)}$ be the closed linear span of $\{ f_j : j \geq 1 \}$. In Section~\ref{S:EQU_MEAS} we show that $\Prob_X$ and $\Prob_{X^{(A)}}$ are equivalent on $\F_s$ if and only if for every $e_i$ there is an $e_i' \in H^{(A)}$ with $(ue_i')(x) = (ue_i)(x)$, for all $0 \leq x \leq s$.

In Section~\ref{S:NON_ANTI} we show that, under some assumptions on $X$ and $A$, the process $X^{(A)}$ solves a stochastic differential equation of the form
\equ[E:INTRO_4]{ dX^{(A)}_s = \alpha dW_s + \delta(s, X^{(A)}) ds, \quad X^{(A)}_0 = 0, \quad 0 \leq s < T, }
where $W$ is a standard linear Brownian motion and $\delta$ is a progressively measurable functional on $C([0,T])$.

In Section~\ref{S:MARKOV} we assume that $X$ is a Markov process. Defining $I^{(A), 1}, \ldots, I^{(A), N}$ by $I^{(A), i}_s = \int_0^s X^{(A)}_x \, a_i(dx)$, where $A = \{ a_1, \ldots, a_N \}$, it is shown in Theorem~\ref{T:MP} that $(X^{(A)}_s, I^{(A), 1}_s, \ldots, I^{(A), N}_s)_{s \in [0,T]}$ is a Markov process as well. Based on this result we find a formula for $\E [X^{(A)}_t \mid \F^{X^{(A)}}_s]$, $s \leq t$, where $\F^{X^{(A)}}$ denotes the natural filtration of $X^{(A)}$ (Theorem~\ref{T:EXP_FUTURE}), which enables us to find the $\delta$ in~\eqref{E:INTRO_4} explicitly.


\section{A series expansion and basic properties of $X^{(A)}$}\label{S:SERIES}

The aim of this section is to find a series expansion of $X^{(A)}$ analogous to that in~\eqref{E:INTRO_3}. As a preliminary we start with a subsection on processes generated by an operator.

\subsection{Gaussian processes generated by an operator}\label{S:OP_GEN_PROC}

Let $v: H \rightarrow C([0,T])$ be a linear and bounded operator from a separable Hilbert space $H$ into $C([0,T])$ and let $v^*: C([0,T])^* \rightarrow H$ be the adjoint operator of $v$, i.e., $\la v^*a, h \ra = a(vh)$ for all $h \in H$ and $a \in C([0,T])^*$. Let $\la \cdot, \cdot \ra$ denote the scalar product on $H$ and $\|\cdot\|$ its induced norm.

For an orthonormal basis $(h_i)_{i=1}^\infty \subset H$ define
\equ[E:CP_3]{ Z_s = \sum_{i=1}^\infty \omega_i (v h_i)(s) = \sum_{i=1}^\infty \omega_i \la v^* \delta_s, h_i \ra, }
where $(\omega_i)_{i=1}^\infty$ is a sequence of independent standard normal random variables. The series on the right hand side of~\eqref{E:CP_3} converges almost surely for each $s \in [0,T]$ because of
\[ \sum_{i=1}^\infty |\la v^* \delta_s, h_i \ra|^2 = \|v^* \delta_s\|^2 < \infty. \]
The exceptional null set in~\eqref{E:CP_3} in general depends on $s \in [0,T]$. So~\eqref{E:CP_3} defines a not necessarily continuous Gaussian process $Z=(Z_s)_{s \in [0,T]}$. If the series
\[ Z = \sum_{i=1}^\infty \omega_i (v h_i) \]
converges almost surely in $C([0,T])$ we say that $v$ \emph{generates} the continuous Gaussian process $Z$ ($v$ is also called \emph{associated operator} of $Z$).

For $a,b \in C([0,T])^*$ we have 
\equ[E:CP_4]{ \E a(Z) b(Z) = \sum_{i=1}^\infty a(v h_i) b(v h_i) = \la v^* a, v^* b \ra. }
In particular, for the covariance function $R_Z(s,t) = \E Z_s Z_t$ of $Z$ it holds $R_Z(s,t) = \la v^* \delta_s, v^* \delta_t \ra$. Hence, a change of the orthonormal basis in~\eqref{E:CP_3} gives another process $Z'$, in general different from $Z$. But, by~\eqref{E:CP_4}, $Z$ and $Z'$ have the same finite-dimensional distributions.

\subsection{A series expansion of the process $X^{(A)}$}

The following result will be crucial for our work.

\begin{theorem}[Theorem~3.5.1 in~\cite{Bog98}]\label{T:GEN_PROCESS}
  For the continuous Gaussian process $X=(X_s)_{s \in [0,T]}$ there is a separable Hilbert space $H$ and a linear and bounded operator $u:H \rightarrow C([0,T])$ such that, for every orthonormal basis $(h_i)_{i=1}^\infty \subset H$,
  \equ[E:CP_2b]{ X = \sum_{i=1}^\infty \omega_i (uh_i) }
  in distribution. In particular, the series on the right hand side converges almost surely in $C([0,T])$.
\end{theorem}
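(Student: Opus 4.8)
The plan is to take $H$ to be the \emph{Cameron--Martin space} of the law $\mu := P_X$ of $X$ on $C([0,T])$ and $u$ to be the canonical (continuous) inclusion $H \hookrightarrow C([0,T])$. First I would record the two inputs that make this construction legitimate. Each $a \in C([0,T])^*$ yields a centred Gaussian variable $a(X) \in L^2(\Prob)$; let $\mathcal K$ be the closure in $L^2(\Prob)$ of $\{ a(X) : a \in C([0,T])^* \}$, the associated Gaussian Hilbert space. By Fernique's theorem $\E \|X\|_\infty^2 < \infty$, so the Bochner integral $R_\mu \xi := \E[X\,\xi]$ defines an element of $C([0,T])$ for every $\xi \in \mathcal K$, and one sets $H := R_\mu(\mathcal K)$ with the inner product transported from $\mathcal K$, making $R_\mu : \mathcal K \to H$ unitary. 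Then $H$ is separable (as $L^2(\Prob)$ is), and the estimate $\| R_\mu \xi \|_\infty \le (\E\|X\|_\infty^2)^{1/2}\, \|\xi\|_{L^2}$ shows that $u$ is bounded.

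Next I would dispose of the finite-dimensional statement. The adjoint satisfies $\la u^* a, u^* b \ra_H = \E[a(X)b(X)]$ for $a,b \in C([0,T])^*$; taking $a = \delta_s$ and $b = \delta_t$ gives $\la u^*\delta_s, u^*\delta_t\ra = R_X(s,t)$. Hence the operator-generated process $Z_s = \sum_i \omega_i (u h_i)(s)$ of Section~\ref{S:OP_GEN_PROC} has, by~\eqref{E:CP_4}, covariance $R_Z(s,t) = \la u^*\delta_s, u^*\delta_t\ra = R_X(s,t)$; being centred Gaussian with the same covariance as $X$, it has the same finite-dimensional distributions, which settles the last assertion.

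The substantive point is almost sure convergence of $\sum_i \omega_i(u h_i)$ in the sup-norm, and this is where I expect the real difficulty to lie, since pointwise control is immediate but uniform control is not. My approach is a martingale one. Work on the canonical space where $\mu = P_X$, fix an orthonormal basis $(h_i)$ of $H$, put $\hat h_i := R_\mu^{-1} h_i \in \mathcal K$, and let $\mathcal G_n := \sigma(\hat h_1, \ldots, \hat h_n)$. The $\hat h_i$ are orthonormal in the Gaussian space $\mathcal K$, hence i.i.d.\ $N(0,1)$, and $\E[X \hat h_i] = R_\mu \hat h_i = h_i$. Because $X$ and the $\hat h_i$ are jointly Gaussian, testing against any $a \in C([0,T])^*$ identifies the $C([0,T])$-valued conditional expectation as the orthogonal projection, $\E[X \mid \mathcal G_n] = \sum_{i=1}^n \E[X \hat h_i]\, \hat h_i = \sum_{i=1}^n \hat h_i\, h_i$, so the partial sums $S_n := \sum_{i=1}^n \hat h_i\, h_i$ form a closed $C([0,T])$-valued martingale. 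Since the $\hat h_i$ are total in $\mathcal K$ and $\mathcal K$ contains every $a(X)$, and since separability of $C([0,T])$ gives $\mathcal C = \sigma(a : a \in C([0,T])^*)$, one obtains $\bigvee_n \mathcal G_n = \sigma(X)$. The vector-valued martingale convergence theorem for closed martingales then yields $S_n \to \E[X \mid \sigma(X)] = X$ almost surely (and in $L^1$) in $C([0,T])$.

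Finally I would remove the dependence on the special choice $\omega_i = \hat h_i$. Convergence of $\sum_i x_i (u h_i)$ in $C([0,T])$ is a Borel condition on $(x_i) \in \R^\infty$ (the partial sums being Cauchy), so the probability that $\sum_i \omega_i(u h_i)$ converges depends only on the law of the sequence $(\omega_i)$, namely the standard Gaussian product law for any sequence of independent standard normals. Having shown this probability equals one for one such sequence, it equals one for all of them, and the common limit has the law $\mu$ computed above; this gives the theorem for every orthonormal basis. An alternative to the martingale step is the Itô--Nisio theorem, which reduces almost sure convergence of the symmetric independent series to weak convergence of its partial sums, itself following from the finite-dimensional convergence together with tightness.
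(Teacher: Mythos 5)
The paper does not actually prove this statement; it imports it wholesale as Theorem~3.5.1 of Bogachev's book, so there is no internal proof to compare against. Judged on its own, your argument is correct and is essentially the standard proof of that cited result: identify $H$ with the Cameron--Martin space via the covariance operator $R_\mu$ on the Gaussian Hilbert space $\mathcal K$, check $\la u^*a, u^*b\ra = \E[a(X)b(X)]$ to settle the finite-dimensional distributions, obtain the partial sums as the closed $C([0,T])$-valued martingale $\E[X\mid\sigma(\hat h_1,\ldots,\hat h_n)]$ (legitimate since Fernique gives Bochner integrability), and transfer almost sure convergence to an arbitrary i.i.d.\ standard normal sequence by observing that the convergence set is Borel in $\R^\infty$ and depends only on the product Gaussian law. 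All the load-bearing steps are sound: $R_\mu$ is injective on $\mathcal K$ because $\mathcal K$ is the closed linear span of $\{X_s\}$ and $R_\mu\xi=0$ forces $\xi\perp X_s$ for all $s$; the $\hat h_i$ form an orthonormal basis of $\mathcal K$ because $R_\mu$ is unitary onto $H$; and $\bigvee_n\mathcal G_n$ generates $\sigma(X)$ up to null sets.

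Two small points deserve tightening. Separability of $H$ should not be attributed to separability of $L^2(\Prob)$ (which may fail for a general probability space) but to the fact that $\mathcal K$ is the closed span of the $L^2$-continuous curve $s\mapsto X_s$ over the separable set $[0,T]$. And the closing aside about It\^o--Nisio is glib as stated: reducing to ``weak convergence of the partial sums, following from finite-dimensional convergence together with tightness'' hides the fact that tightness of $(S_n)$ is exactly the nontrivial uniform estimate; the clean way to invoke It\^o--Nisio is its criterion that $\la a, S_n\ra\to a(X)$ in probability for every $a\in C([0,T])^*$ on the canonical space, which you already have. Neither point affects the validity of your main (martingale) argument.
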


We define the closed linear subspace
\[ H^{(A)} = \{ h \in H : a(uh) = 0 \quad \text{for all $a \in A$} \} \subset H \]
and call it the \emph{reduced Hilbert space} with respect to $A$. Let $H_{(A)} \subset H$ be the orthogonal complement of $H^{(A)}$ (we write $H_{(A)} = H \ominus H^{(A)}$). We call $H_{(A)}$ the \emph{detached subspace} of $H$ with respect to $A$. By definition of $u^*$, 
\al{ H^{(A)} &= \{ h \in H : \la u^*a, h \ra = 0 \quad \text{for all $a \in A$} \} \\
             &= \{ h \in H : \text{$h$ is orthogonal to $u^*a$ for all $a \in A$} \} \subset H, }
and thus $H_{(A)}$ is spanned by the elements $u^*a$,
\equ[E:CONDITIONS]{ H_{(A)} = \setspan \{ u^*a : a \in A \}, }
implying that $H_{(A)}$ is (at most) of dimension $N$.

Define
\equ[E:CP_2]{ X^{(A)} = \sum_{i=1}^\infty \omega_i (u f_i), }
where $(f_i)_{i=1}^\infty \subset H^{(A)}$ is an orthonormal basis in $H^{(A)}$. Applying~\eqref{E:CP_4} for the operator $u$ restricted to $H^{(A)}$, we see that the law of $X^{(A)}$ is independent of the choice of the orthonormal basis in $H^{(A)}$ and since~\eqref{E:CP_2} differs from~\eqref{E:CP_2b} only by a finite number of terms (given that we assume that $\{ f_1, f_2, \ldots \}$ is a subset of $\{ h_1, h_2, \ldots \}$) the series in~\eqref{E:CP_2} converges in $C([0,T])$ almost surely.

\begin{theorem}\label{T:COND_PROC}
  The process $X^{(A)}$ defined in~\eqref{E:CP_2} is a conditioned process of $X$ with respect to $A$.
\end{theorem}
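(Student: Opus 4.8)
The plan is to verify that $X^{(A)}$ and the process $X$ viewed under $\Prob^{(A)}$ are both centered continuous Gaussian processes sharing the same covariance function. Since the Borel $\sigma$-algebra $\mathcal{C}$ of the separable space $C([0,T])$ is generated by the point evaluations $\delta_s$, two laws on $(C([0,T]), \mathcal{C})$ that induce the same finite-dimensional distributions coincide; for centered Gaussian processes this reduces to equality of covariances, so matching covariances will give $\Prob_{X^{(A)}} = P_X^{(A)}$. That $X^{(A)}$ is a centered continuous Gaussian process is immediate from the construction, since the series~\eqref{E:CP_2} converges in $C([0,T])$ almost surely. On the other side, conditioning the centered jointly Gaussian family $(X_s)_s$ together with $(a(X))_{a \in A}$ on the vanishing of the latter again produces a centered Gaussian law, because the regression mean is linear in the conditioning value and that value is $0$; hence only the covariance has to be computed.

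To obtain the conditional covariance I would use the identification of conditioning with orthogonal projection in the Gaussian Hilbert space $\mathcal{H}(X)$, the closed linear span in $L^2(\Prob)$ of the variables $X_s$, $s \in [0,T]$. The same computation as in~\eqref{E:CP_4} gives $\E[a(X)b(X)] = \la u^*a, u^*b \ra$, so the assignment $a(X) \mapsto u^*a$ extends to an isometry of $\mathcal{H}(X)$ onto $\overline{\mathrm{range}(u^*)} \subset H$; in particular $X_s$ corresponds to $u^*\delta_s$, and the conditioning subspace $V = \setspan\{a(X) : a \in A\}$ corresponds to $H_{(A)} = \setspan\{u^*a : a \in A\}$ from~\eqref{E:CONDITIONS}. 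Conditioning on $\{a(X)=0 \text{ for all } a \in A\}$ replaces each $X_s$ by its residual after orthogonal projection onto $V$, and the conditional covariance of $X_s, X_t$ equals $\E[(X_s - \Pi_V X_s)(X_t - \Pi_V X_t)]$. Under the isometry this residual corresponds to $u^*\delta_s - P_{(A)} u^*\delta_s = P^{(A)} u^*\delta_s$, where $P^{(A)}$ and $P_{(A)}$ are the orthogonal projections onto $H^{(A)}$ and $H_{(A)}$, so the conditional covariance equals $\la P^{(A)} u^*\delta_s, P^{(A)} u^*\delta_t \ra$.

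It then remains to compute the covariance of $X^{(A)}$ and see that it agrees. Using $(uf_i)(s) = \la u^*\delta_s, f_i \ra$ and the first equality in~\eqref{E:CP_4} for the series~\eqref{E:CP_2} over the orthonormal basis $(f_i)$ of $H^{(A)}$,
\[ R_{X^{(A)}}(s,t) = \sum_{i=1}^\infty \la u^*\delta_s, f_i \ra \la u^*\delta_t, f_i \ra = \la P^{(A)} u^*\delta_s, P^{(A)} u^*\delta_t \ra, \]
which is exactly the conditional covariance found above; this identifies the two laws.

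The step I expect to be the main obstacle is the rigorous justification, via the Gaussian conditioning theory of Chapter~9 in~\cite{Jan97}, that conditioning on the null event $\{a(X)=0, a \in A\}$ yields a centered Gaussian law with covariance $\E[(X_s - \Pi_V X_s)(X_t - \Pi_V X_t)]$, and that the abstract projection $\Pi_V$ in $\mathcal{H}(X)$ transports under the isometry to the concrete projection $P_{(A)}$ onto $H_{(A)}$. The remaining care is only that the isometry is well defined on all of $\mathcal{H}(X)$ and that $u^*\delta_s$ genuinely represents $X_s$, both of which follow from~\eqref{E:CP_4} and the finite-dimensional identity in Theorem~\ref{T:GEN_PROCESS}.
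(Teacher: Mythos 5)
Your proof is correct, but it takes a genuinely different route from the paper's. You verify that both laws are centered Gaussian and then match covariance functions, transporting the conditioning-as-projection picture through the isometry $a(X)\mapsto u^*a$ of the Gaussian Hilbert space onto $\overline{\mathrm{range}(u^*)}$ to identify the conditional covariance with $\la P^{(A)}u^*\delta_s,\,P^{(A)}u^*\delta_t\ra$, which is visibly the covariance of the series~\eqref{E:CP_2}. The paper instead writes $X = X^{(A)} + \sum_{i=1}^N \omega'_i(ue_i)$ in law, with $(e_i)$ an orthonormal basis of $H_{(A)}$ and the $\omega'_i$ independent of the $H^{(A)}$-part, observes that the conditioning event $\{a(X)=0 \text{ for all } a\in A\}$ coincides with $\{\omega'_1=\cdots=\omega'_N=0\}$ (a nonzero element of $H_{(A)}$ annihilated by all $a\in A$ would lie in $H^{(A)}$, contradicting orthogonality), and concludes by independence without computing any covariance. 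Your approach has the advantage of delivering Proposition~\ref{P:COND_COVARIANCE} as an immediate byproduct and of making explicit exactly where the Gaussian conditioning theory of Chapter~9 in~\cite{Jan97} enters; its cost is that the step you yourself flag as the main obstacle --- that conditioning on the null event yields the residual covariance --- carries the whole weight of the argument. The paper's route reduces the singular conditioning to the most transparent instance, namely independent standard normals conditioned to vanish. Both arguments ultimately rest on the same orthogonal splitting $H = H^{(A)}\oplus H_{(A)}$, and both are complete modulo the standard fact that a centered Gaussian law on $(C([0,T]),\mathcal{C})$ is determined by its covariance.
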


\begin{proof}
  We have to show $\Prob_X^{(A)}(F) = \Prob(X^{(A)} \in F)$ for all $F \in \mathcal{C}$ with $\Prob_X^{(A)}$ defined in~\eqref{E:INTRO_2}. Let $(e_i)_{i=1}^N \subset H_{(A)}$ be an orthonormal basis in $H_{(A)}$. Then the processes $X$ and
  \[ X^{(A)} + \sum_{i=1}^N \omega'_i (ue_i) \]
  coincide in law, where $(\omega'_i)_{i=1}^N$ are independent standard normal distributed random variables independent from $X^{(A)}$. We thus have for $F \in \mathcal{C}$
  \[ \Prob_X^{(A)}(F) = \Prob_X\left(F \ \Big| \bigcap_{a \in A} a^{-1}(0) \right) = \Prob_{X^{(A)} + \sum_{i=1}^N \omega'_i (ue_i)}\left(F \ \Big| \bigcap_{a \in A} a^{-1}(0) \right). \]
  Since $a(X^{(A)}) = 0$ and for all $1 \leq i \leq N$ there is an $a \in A$ such that $a(u e_i) \neq 0$ it follows
  \[ \Prob_X^{(A)}(F) = \Prob_{X^{(A)}}\left(F \ \Big| \bigcap_{a \in A} a^{-1}(0) \right) = \Prob(X^{(A)} \in F ). \qedhere \]

\end{proof}

Let $R_{X^{(A)}}$ be the covariance function of the conditioned process $X^{(A)}$ of $X$ with respect to $A \subset C([0,T])^*$.

\begin{prop}\label{P:COND_COVARIANCE}
  Let $(e_i)_{i=1}^N \subset H_{(A)}$ be an orthonormal basis in the detached subspace $H_{(A)}$. Then
  \[ R_{X^{(A)}}(s,t) = R_X(s, t) - \sum_{i=1}^N (u e_i)(s) (u e_i)(t). \]  
\end{prop}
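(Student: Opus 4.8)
The plan is to compute both covariance functions via the series-expansion formula~\eqref{E:CP_4} and subtract. Recall that formula~\eqref{E:CP_4} expresses the covariance of a process generated by an operator in terms of an orthonormal basis, and this representation is basis-independent. The key structural fact, established just before the proposition, is that $H = H_{(A)} \oplus H^{(A)}$ is an orthogonal decomposition with $(e_i)_{i=1}^N$ an orthonormal basis of the $N$-dimensional detached subspace $H_{(A)}$ and $(f_j)_{j=1}^\infty$ an orthonormal basis of the reduced subspace $H^{(A)}$.

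First I would choose a convenient orthonormal basis of the whole space $H$, namely the concatenation $\{e_1, \ldots, e_N, f_1, f_2, \ldots\}$, which is an orthonormal basis of $H$ precisely because of the orthogonal direct sum decomposition. Applying Theorem~\ref{T:GEN_PROCESS} together with the covariance formula~\eqref{E:CP_4} to the process $X$ generated by $u$ with this basis yields
\equ{ R_X(s,t) = \sum_{i=1}^N (u e_i)(s)(u e_i)(t) + \sum_{j=1}^\infty (u f_j)(s)(u f_j)(t). }
Since the formula in~\eqref{E:CP_4} does not depend on the particular orthonormal basis, this identity is legitimate for the chosen basis.

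Next I would apply the same covariance formula to the conditioned process $X^{(A)} = \sum_{j=1}^\infty \omega_j (u f_j)$ from its defining series~\eqref{E:CP_2}. Here the generating operator is again $u$ but the relevant Hilbert space is $H^{(A)}$, for which $(f_j)_{j=1}^\infty$ is an orthonormal basis. Thus~\eqref{E:CP_4} gives
\equ{ R_{X^{(A)}}(s,t) = \sum_{j=1}^\infty (u f_j)(s)(u f_j)(t). }
Subtracting this from the expression for $R_X(s,t)$ immediately yields the claimed identity
\[ R_{X^{(A)}}(s,t) = R_X(s,t) - \sum_{i=1}^N (u e_i)(s)(u e_i)(t), \]
and the finite sum is exactly the contribution of the detached subspace.

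I expect no serious obstacle here: the statement is essentially a bookkeeping consequence of the orthogonal splitting together with the basis-independence of~\eqref{E:CP_4}. The only point requiring a little care is justifying that~\eqref{E:CP_4} applies to $X^{(A)}$ in the first place, i.e., that $X^{(A)}$ is genuinely a process generated by the operator $u$ restricted to $H^{(A)}$ with $(f_j)$ an orthonormal basis; this follows because the series~\eqref{E:CP_2} converges almost surely in $C([0,T])$ (it differs from a convergent full-basis series by finitely many terms, as already noted in the text) and the covariance of a jointly Gaussian limit is computed termwise by independence of the $\omega_j$. Everything else is a direct matching of the two series.
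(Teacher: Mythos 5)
Your argument is correct and is essentially identical to the paper's proof: both expand $R_X$ via~\eqref{E:CP_4} in the concatenated orthonormal basis $\{e_1,\ldots,e_N,f_1,f_2,\ldots\}$ of $H = H^{(A)} \oplus H_{(A)}$, identify $R_{X^{(A)}}(s,t) = \sum_{j=1}^\infty (uf_j)(s)(uf_j)(t)$ from the defining series~\eqref{E:CP_2}, and subtract. Your extra remark justifying that~\eqref{E:CP_4} applies to $X^{(A)}$ is a welcome clarification but does not change the route.
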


\begin{proof}
  Let $(f_j)_{j=1}^\infty \subset H^{(A)}$ be an orthonormal basis in $H^{(A)}$. Then an orthonormal basis in $H = H^{(A)} \oplus H_{(A)}$ is $\{ e_1, \ldots, e_N, f_1, f_2, \ldots \}$ and thus, by~\eqref{E:CP_4},
  \[ R_X(s,t) = \sum_{i=1}^N (u e_i)(s) (u e_i)(t) + \sum_{j=1}^\infty (u f_j)(s) (u f_j)(t). \]
  Hence,
  \[ R_{X^{(A)}}(s,t) = \sum_{j=1}^\infty (u f_j)(s) (u f_j)(t) = R_X(s, t) - \sum_{i=1}^N (u e_i)(s) (u e_i)(t). \qedhere \]
\end{proof}

\subsection{Anticipative representation}\label{SS:ANTI}

Define Gaussian processes $I^1, \ldots, I^N$ by
\equ[E:INT_PROC]{ I^i_s = \int_0^s X_x \, a_i(dx), \quad 0 \leq s \leq T, \quad 1 \leq i \leq N. }
In particular, we have $I_T^i = a_i(X)$ for $1 \leq i \leq N$.

\begin{prop}\label{P:ON_COND}
  Given a set of conditions $A = \{ a_1, \ldots, a_N \}$ there is another set of conditions $\hat A = \{ \hat a_1, \ldots, \hat a_M \}$ ($M \leq N$) such that $X^{(A)} = X^{(\hat A)}$ in distribution, the random variables $\hat I_T^1, \ldots, \hat I_T^M$ (defined analogously to~\eqref{E:INT_PROC}) are independent and standard normal, and the set $\{ u^* \hat a_i : 1 \leq i \leq M \}$ is an orthonormal basis in $H_{(\hat A)}$.
\end{prop}

\begin{proof}
  Let the conditions $a_1, \ldots, a_N$ be arbitrary. Then the Gram-Schmidt orthonormalization $\widehat{I}^1_T = I^1_T / \E \left[ I^1_T \right]^2$,
  \equ[E:ANTI_1]{ \widehat{I}^i_T = \mathring{I}^i_T / \E \left[ \mathring{I}^i_T \right]^2, \quad \text{where} \quad \mathring{I}^i_T = I^i_T - \sum_{j=1}^{i-1} \E \left[ I^i_T \widehat{I}^j_T \right] \widehat{I}^j_T, \quad i = 2, \ldots, N, }
  yields independent standard normal random variables $\widehat{I}^1_T, \ldots, \widehat{I}^N_T$ and conditioning on $I^i_T = 0$ is equivalent to conditioning on $\widehat{I}^i_T = 0$ almost surely for $1 \leq i \leq N$ (here we assume without loss of generality that $\mathring{I}^i_T \neq 0$ for all $i = 2, \ldots, N$; if this is not true, we continue only with those $M < N$ many random variables for which it is). Now define measures $\hat{a}_1, \ldots, \hat{a}_N$ by $\hat{a}_1 = a_1 / \E \left[ I^1_T \right]^2$ and
  \[ \hat{a}_i = \mathring{a}_i / \E \left[ \mathring{I}^i_T \right]^2, \quad \text{where} \quad \mathring{a}_i = a_i - \sum_{j=1}^{i-1} \E \left[ I^i_T \widehat{I}^j_T \right] \hat{a}_j. \]
  Then we have $\widehat{I}^i_T = \int_0^T X_x \, \hat{a}_i(dx)$, i.e., we obtain independent standard normal random variables and conditioning with respect to $\{ a_1, \ldots, a_N \}$ is equivalent to conditioning with respect to $\{ \hat{a}_1, \ldots, \hat{a}_N \}$.

  From~\eqref{E:CP_4} it follows for all $1 \leq i, j \leq N$ that
  \[ \la u^* \hat{a}_i, u^* \hat{a}_j \ra = \E \left[ \hat{a}_i(X) \hat{a}_j(X) \right] = \E \left[ \hat{I}^i_T \hat{I}^j_T \right] = \delta_{i,j}, \]
  where $\delta_{i,j}$ denotes the Kronecker delta. Hence, $\{ u^* \hat{a}_1, \ldots, u^* \hat{a}_N \}$ is an orthonormal set in $H$ and thus, by~\eqref{E:CONDITIONS}, an orthonormal basis in $H_{(\hat A)}$.
\end{proof}

The following result follows directly from the general theory of conditioning of Gaussian random variables (see Chapter~9 in~\cite{Jan97}).

\begin{prop}\label{P:ANTI}
  Let $a_1, \ldots, a_N$ be such that the random variables $I_T^1, \ldots, I_T^N$ are independent and standard normal random variables. Then an anticipative representation for $X^{(A)}$ is
  \[ X^{(A)}_s = X_s - \sum_{i=1}^N \E \left[ X_s I_T^i \right] I_T^i. \]
\end{prop}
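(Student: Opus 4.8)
The plan is to read the claimed formula as the orthogonal decomposition of $X_s$ in the Gaussian Hilbert space generated by $X$. Set
\[ Y_s = X_s - \sum_{i=1}^N \E[X_s I_T^i]\, I_T^i, \qquad 0 \leq s \leq T. \]
Since $I_T^1, \ldots, I_T^N$ are independent standard normal by hypothesis, the subtracted sum is exactly the $L^2$-projection of $X_s$ onto $\setspan\{I_T^1, \ldots, I_T^N\}$, so each $Y_s$ is orthogonal to every $I_T^j$. All the variables in sight are jointly centered Gaussian, so orthogonality is independence, and in fact the whole process $Y=(Y_s)_{s\in[0,T]}$ is independent of the vector $(I_T^1,\ldots,I_T^N)$.

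First I would use this independence to read off the conditional law. On the event $\{I_T^i=0 \text{ for all } i\}$ the projection term vanishes and $X_s=Y_s$ pathwise; because $Y$ is independent of $(I_T^1,\ldots,I_T^N)$, conditioning on this event does not alter the law of $Y$. Hence the law of $X$ under $\Prob^{(A)}$ coincides with the unconditional law of $Y$. Each $(ue_i)(s)$ lies in $C([0,T])$, so $Y$ is a finite continuous perturbation of the continuous process $X$ and therefore has almost surely continuous paths; being also centered Gaussian, $Y$ is a conditioned process of $X$ with respect to $A$, which is the assertion. The rigorous treatment of this conditioning on a null event is precisely what the general Gaussian theory in Chapter~9 of~\cite{Jan97} provides.

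As a cross-check using the results already at hand, I would match covariances. Choosing $e_i=u^*a_i$, which is an orthonormal basis of $H_{(A)}$ by Proposition~\ref{P:ON_COND}, the key identity is $\E[X_s I_T^i]=(ue_i)(s)$: indeed $\E[X_s I_T^i]=\int_0^T R_X(s,y)\,a_i(dy)=\la u^*\delta_s, u^*a_i\ra=(ue_i)(s)$ by~\eqref{E:CP_4} and the definition of $u^*$. Expanding $\E[Y_s Y_t]$ and using $\E[I_T^i I_T^j]=\delta_{i,j}$ then collapses the cross terms to give $R_Y(s,t)=R_X(s,t)-\sum_{i=1}^N (ue_i)(s)(ue_i)(t)$, which is exactly $R_{X^{(A)}}(s,t)$ by Proposition~\ref{P:COND_COVARIANCE}. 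Two centered Gaussian processes with the same covariance have identical finite-dimensional distributions, confirming that $Y$ and $X^{(A)}$ agree in law.

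I expect no real obstacle here; the only delicate point is the legitimacy of conditioning on the probability-zero event $\{I_T^i=0\}$, which the Gaussian framework (orthogonal projection in the Gaussian Hilbert space) makes rigorous. The representation is \emph{anticipative} because each $I_T^i=\int_0^T X_x\,a_i(dx)$ depends on the whole path of $X$ and is not $\F_s$-measurable, exactly as $W_1$ enters the classical bridge formula~\eqref{E:INTRO_GB_1}.
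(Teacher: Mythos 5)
Your proof is correct and is essentially the paper's own argument: the paper states that the proposition ``follows directly from the general theory of conditioning of Gaussian random variables'' (Chapter~9 of~\cite{Jan97}), and what you have written out --- the subtracted sum as the orthogonal projection of $X_s$ onto $\setspan\{I_T^1,\ldots,I_T^N\}$, independence of the Gaussian residual from the conditioning variables, hence invariance of its law under conditioning on $\{I_T^i=0\}$ --- is precisely that general theory made explicit. The covariance cross-check against Proposition~\ref{P:COND_COVARIANCE} is a sound additional confirmation but not needed.
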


We drop the requirement that $I^1_T, \ldots, I^N_T$ are orthonormal but we still assume that
the set $\{ u^*a_i : 1 \leq i \leq N \} \subset H_{(A)}$ is linearly independent in $H$. Let $(e_i)_{i=1}^N \subset H_{(A)}$ be an orthonormal basis $H_{(A)}$ and define a matrix $B$ and a vector $b(X)$ by
\[ B = \left( \begin{matrix} a_1(ue_1) & a_1(ue_2) & \dots & a_1(ue_N) \\ a_2(ue_1) & a_2(ue_2) & \dots & a_2(ue_N) \\ \vdots & \vdots & \ddots & \vdots \\ a_N(ue_1) & a_N(ue_2) & \dots & a_N(ue_N) \end{matrix} \right) \quad \text{and} \quad b(X) = \left( \begin{matrix} a_1(X) \\ a_2(X) \\ \vdots \\ a_N(X) \end{matrix} \right). \]

\begin{theorem}\label{T:ANTI}
  The matrix $B$ is invertible and an anticipative representation of the conditioned process $X^{(A)}$ is
  \equ[E:CP_5]{ X^{(A)} = X - \sum_{i=1}^N \xi_i(X) (ue_i), }
  where $\xi(X) = (\xi_1(X), \ldots, \xi_N(X))^\tau$ is given by $\xi(X) = B^{-1} b(X)$.
\end{theorem}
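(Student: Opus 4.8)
The plan is to prove the two assertions separately, beginning with the invertibility of $B$ and then deriving the representation~\eqref{E:CP_5} by coupling $X$ and $X^{(A)}$ on a single probability space via the series expansion.

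For the invertibility, I would rewrite the entries of $B$ through the adjoint: since $a_k(ue_i) = \la u^*a_k, e_i \ra$, the $k$-th row of $B$ is exactly the coordinate vector of $u^*a_k$ with respect to the orthonormal basis $(e_i)_{i=1}^N$ of $H_{(A)}$, that is, $u^*a_k = \sum_{i=1}^N a_k(ue_i)\, e_i$. By~\eqref{E:CONDITIONS} each $u^*a_k$ lies in $H_{(A)}$, which has dimension $N$, and by assumption the set $\{u^*a_1, \ldots, u^*a_N\}$ is linearly independent. Hence these coordinate vectors, i.e. the rows of $B$, are linearly independent in $\R^N$, so the $N \times N$ matrix $B$ has full rank and is invertible.

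For the representation, the key idea is to realize $X$ and $X^{(A)}$ simultaneously on one probability space. Completing $(e_i)_{i=1}^N$ by an orthonormal basis $(f_j)_{j\geq 1}$ of $H^{(A)}$ to an orthonormal basis $\{e_1, \ldots, e_N, f_1, f_2, \ldots\}$ of $H$, Theorem~\ref{T:GEN_PROCESS} gives $X = \sum_{i=1}^N \omega_i(ue_i) + \sum_{j=1}^\infty \omega_j'(uf_j)$ with independent standard normals, while $X^{(A)} = \sum_{j=1}^\infty \omega_j'(uf_j)$ is a conditioned process by~\eqref{E:CP_2} and Theorem~\ref{T:COND_PROC}. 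Then $X - X^{(A)} = \sum_{i=1}^N \omega_i(ue_i)$, so it suffices to identify the vector $\omega = (\omega_1, \ldots, \omega_N)^\tau$ with $\xi(X)$. To this end I would apply each condition $a_k$ to $X$ and exploit the orthogonality of $H_{(A)}$ and $H^{(A)}$: since $a_k(uf_j) = \la u^*a_k, f_j \ra = 0$ for $u^*a_k \in H_{(A)}$ and $f_j \in H^{(A)}$, the $f_j$-terms vanish and $a_k(X) = \sum_{i=1}^N \omega_i\, a_k(ue_i) = (B\omega)_k$. Thus $b(X) = B\omega$, and invertibility of $B$ yields $\omega = B^{-1}b(X) = \xi(X)$; substituting back gives $X^{(A)} = X - \sum_{i=1}^N \xi_i(X)(ue_i)$.

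I expect the only genuinely substantive point to be the invertibility step, namely turning the hypothesized linear independence of $\{u^*a_k\}$ into full rank of $B$ via the adjoint identity; once $X$ and $X^{(A)}$ share a common probability space, the representation falls out cleanly because the cross terms $a_k(uf_j)$ disappear by the orthogonality $H_{(A)} \perp H^{(A)}$.
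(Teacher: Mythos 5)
Your proposal is correct and follows essentially the same route as the paper: the representation is obtained by coupling $X$ and $X^{(A)}$ through the common series expansion and reading off the coefficients $\omega_i$ from $a_k(X) = (B\omega)_k$ (the paper phrases this as $0 = a_j(X^{(A)}) = a_j(X) - \sum_i \omega_i a_j(ue_i)$, which is the same computation since $a_k(uf_j)=0$). The only cosmetic difference is in the invertibility step, where you identify the rows of $B$ directly as the coordinate vectors of the linearly independent $u^*a_k$ in the orthonormal basis of $H_{(A)}$, while the paper passes to the Gram matrix $B'$ with entries $\la u^*a_j, u^*a_i\ra$ and uses the quadratic form $\bigl\|\sum_i \lambda_i u^*a_i\bigr\|^2$; both arguments rest on the same hypothesis and are equally valid.
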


\begin{proof}
  In order to show that the matrix $B$ is invertible, we show that the rank of $B$ is $N$. Since the $e_i$'s form an orthonormal basis in the Hilbert space spanned by $\{ u^*a_1, \ldots, u^*a_N \}$, the rank of $B$ is equal to the rank of $B'$ with
  \[ B' = \left( \begin{matrix} a_1(u u^*a_1) & a_1(u u^*a_2) & \dots & a_1(u u^*a_N) \\ a_2(u u^*a_1) & a_2(u u^*a_2) & \dots & a_2(u u^*a_N) \\ \vdots & \vdots & \ddots & \vdots \\ a_N(u u^*a_1) & a_N(u u^*a_2) & \dots & a_N(u u^*a_N) \end{matrix} \right). \]
  Hence, it is enough to show that the columns of $B'$ are linearly independent. We assume
  \[ \mathbf{0} = \left( \sum_{i=1}^N \lambda_i a_1(uu^*a_i), \ldots, \sum_{i=1}^N \lambda_i a_N(uu^*a_i) \right). \]
  Then,
  \[ 0 = \sum_{j=1}^N \lambda_j \sum_{i=1}^N \lambda_i a_j(uu^*a_i) = \sum_{i,j=1}^N \lambda_i \lambda_j \la u^*a_i , u^*a_j \ra = \left\| \sum_{i=1}^N \lambda_i u^*a_i \right\|^2 \]
  which yields the requirement $\sum_{i=1}^N \lambda_i u^*a_i = 0$ and thus $\lambda_i = 0$, $1 \leq i \leq N$, since $\{ u^*a_1, \ldots, u^*a_N \}$ is assumed to be linearly independent in $H$. Hence, the rank of $B'$ and $B$ is $N$ and the matrix $B$ is invertible.

  Formula~\eqref{E:CP_5} follows from
  \[ X = X^{(A)} + \sum_{i=1}^N \omega_i' (ue_i), \]
  where $\omega_1', \ldots, \omega_N'$ are independent standard normal random variables independent from $X^{(A)}$. Once we see a realization $X(\omega)$ of $X$ we do not know a priori, which values the $\omega_i'$'s attained. But we can calculate them from the fact that
  \[ 0 = a_j(X^{(A)}) = a_j(X) - \sum_{i=1}^N \omega_i a_j(ue_i) \]
  for all $1 \leq j \leq N$, which leads to the system of linear equations $B \xi = b(X)$, its solution $\xi(X)$ and the claimed representation for $X^{(A)}$.
\end{proof}


\section{Equivalence of measures}\label{S:EQU_MEAS}
 
Let $X$ be a continuous Gaussian process and let $X^{(A)}$ be the conditioned process of $X$ with respect to a finite set of conditions $A = \{ a_1, \ldots, a_N \}$. Moreover, let $P_X$ and $P_{X^{(A)}}$ be the induced measures of $X$ and $X^{(A)}$ on $(C([0,T]), \mathcal{C})$.
 
We can not expect that $P_X$ and $P_{X^{(A)}}$ are equivalent on $\mathcal{C}$ since
\[ P_X(\{ f \in C([0,T]) : a(f) = 0 \quad \forall a \in A \}) = 0 \]
in case that $X$ does not fulfill all conditions in $A$, while
\[ P_{X^{(A)}}(\{ f \in C([0,T]) : a(f) = 0 \quad \forall a \in A \}) = 1. \]
However, in this section, we show that $P_X$ and $P_{X^{(A)}}$ are equivalent on a suitable sub-$\sigma$-algebra of $\mathcal{C}$.

Let $X$ be generated by the operator $u:H \rightarrow C([0,T])$ and let $\{ e_1, \ldots, e_N \}$ be an orthonormal basis in the detached Hilbert space $H_{(A)} \subset H$ (w.l.o.g. we assume $\dim (H_{(A)}) = N$; otherwise let some of the $e_i$'s be $0$). Recall that $\F_s \subset \mathcal{C}$ is the smallest $\sigma$-algebra on $C([0,T])$ such that all $\delta_r$, $0 \leq r \leq s$, are $\F_s$-$\B(\R)$-measurable.
 
\begin{theorem}\label{T:EQU_MEASURES}
  The probability measures $\Prob_X$ and $\Prob_{X^{(A)}}$ are equivalent on $\F_s$ if and only if
  \equ[E:EQU_MEAS_1]{ \text{there exist $e_i' \in H^{(A)}$, $1 \leq i \leq N,\quad$ such that $(ue_i')(x) = (ue_i)(x), \quad 0 \leq x \leq s$.} }
  Otherwise $\Prob_X$ and $\Prob_{X^{(A)}}$ are orthogonal on $\F_s$.
\end{theorem}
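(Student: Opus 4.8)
The plan is to reduce the statement to a comparison of two centered Gaussian measures on the path space $C([0,s])$ and to read off equivalence versus orthogonality from their reproducing kernel (Cameron--Martin) spaces. Restricting to $\F_s$ amounts exactly to comparing the laws of the restricted paths $(X_x)_{0\le x\le s}$ and $(X^{(A)}_x)_{0\le x\le s}$; both are centered Gaussian, so these laws are determined by the covariance kernels $R_X$ and $R_{X^{(A)}}$ on $[0,s]\times[0,s]$. By Proposition~\ref{P:COND_COVARIANCE} these differ by the finite-rank kernel
\[ R_X(x,y) - R_{X^{(A)}}(x,y) = \sum_{i=1}^N (ue_i)(x)\,(ue_i)(y), \qquad 0\le x,y\le s, \]
a perturbation of rank at most $N$. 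This finiteness is the feature that will make the dichotomy transparent.

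The key step is to identify the two Cameron--Martin spaces. Writing $u_s$ for the composition of $u$ with restriction of functions to $[0,s]$, the generation identity $R_X(x,y)=\langle u^*\delta_x,u^*\delta_y\rangle$ exhibits $u_s$ as a feature map for $X|_{[0,s]}$, so its reproducing kernel Hilbert space is exactly $u_s(H)=\{(uh)|_{[0,s]}:h\in H\}$, carrying the quotient norm inherited from $H$ modulo $\ker u_s$. The same argument applied to $X^{(A)}$, whose generating operator is $u_s$ restricted to $H^{(A)}$, gives reproducing kernel space $u_s(H^{(A)})=\{(uh)|_{[0,s]}:h\in H^{(A)}\}$. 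Condition~\eqref{E:EQU_MEAS_1} says precisely that each $(ue_i)|_{[0,s]}$ lies in $u_s(H^{(A)})$; since $u_s(H)=u_s(H^{(A)})+\mathrm{span}\{(ue_i)|_{[0,s]}:1\le i\le N\}$ because $H=H^{(A)}\oplus H_{(A)}$, this is equivalent to the two reproducing kernel spaces coinciding.

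Finally I would invoke the Feldman--H\'ajek dichotomy for Gaussian measures (see~\cite{Bog98}): two centered Gaussian measures are either equivalent or mutually orthogonal, and equivalence holds iff their Cameron--Martin spaces agree and a certain operator built from the two covariances is Hilbert--Schmidt. Here that operator is controlled by the difference of covariances, which has finite rank and is therefore automatically Hilbert--Schmidt once the Cameron--Martin spaces coincide; so equivalence amounts to equality of the reproducing kernel spaces, i.e.\ to~\eqref{E:EQU_MEAS_1}, and otherwise the measures are orthogonal. The main obstacle is the orthogonal (``only if'') direction: the ``if'' direction can alternatively be made fully constructive, by writing $X|_{[0,s]}=X^{(A)}|_{[0,s]}+\sum_i\omega_i(ue_i')|_{[0,s]}$ with $e_i'\in H^{(A)}$, conditioning on $\omega=(\omega_1,\dots,\omega_N)$, applying the classical Cameron--Martin theorem to each deterministic shift (which now lies in the reproducing kernel space of $X^{(A)}|_{[0,s]}$), and integrating the resulting densities against the law of $\omega$ to obtain an explicit, almost surely positive and finite Radon--Nikodym derivative; but establishing orthogonality when~\eqref{E:EQU_MEAS_1} fails seems to need the full dichotomy, and the delicate technical point throughout is the rigorous identification of the reproducing kernel space with $u_s(H)$, including its norm and completeness, together with the verification that Feldman--H\'ajek applies to Gaussian measures on the Banach space $C([0,s])$.
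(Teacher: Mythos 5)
Your proposal is correct in outline but takes a genuinely different route from the paper. You reduce the problem to the Feldman--H\'ajek dichotomy for the two centered Gaussian laws on $C([0,s])$: identify the Cameron--Martin spaces as $u_s(H)$ and $u_s(H^{(A)})$, observe that~\eqref{E:EQU_MEAS_1} is exactly the set equality $u_s(H)=u_s(H^{(A)})$ (using $H=H^{(A)}\oplus H_{(A)}$), and argue that the Hilbert--Schmidt condition is automatic because the covariances differ by a rank-$N$ kernel. The paper instead argues by hand in the sequence-space model: for sufficiency it builds the explicit map $M$ of~\eqref{E:EQU_MEAS_2} with $\xi_{ij}=\la e_i',f_j\ra$, so that $X(\omega)=X^{(A)}(M(\omega))$ on $[0,s]$, and transfers positivity of measure through $M$ via Proposition~\ref{P:EQU_MEAS}, whose only input is the Cameron--Martin quasi-invariance of the standard Gaussian on $\R^\infty$ under $\ell_2$-shifts; for necessity it constructs an $\F_s$-measurable linear functional $b\circ u_s'^{-1}$ that vanishes a.s.\ under $\Prob_{X^{(A)}}$ but is a nondegenerate Gaussian under $\Prob_X$, giving orthogonality directly without invoking any dichotomy. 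Your approach buys brevity and conceptual clarity (the ``otherwise orthogonal'' clause comes for free from the dichotomy), at the cost of importing the full Feldman--H\'ajek machinery on a Banach space and the rigorous identification of the reproducing kernel spaces; the paper's approach is longer but essentially self-contained and, in the singular case, exhibits a concrete separating functional. One point in your sketch deserves more care than you give it: the Hilbert--Schmidt condition concerns the \emph{normalized} operator, not the raw covariance difference, and finiteness of rank of $R_X-R_{X^{(A)}}$ does not by itself control it; it does work out, because under equality of the Cameron--Martin spaces the relevant quadratic form $\|g\|^2_{H(R_{X^{(A)}})}-\|g\|^2_{H(R_X)}$ is given by the projection onto $\ker u_s\ominus(\ker u_s\cap H^{(A)})$, a subspace of dimension at most $N$, so the operator is finite rank --- but this step should be written out rather than asserted. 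Your alternative constructive argument for the ``if'' direction (conditioning on $\omega$ and integrating Cameron--Martin densities) is sound and is in spirit close to what Proposition~\ref{P:EQU_MEAS} accomplishes; as you correctly note, the mixture argument alone does not yield orthogonality in the ``only if'' direction, since a mixture of measures each singular to a reference measure need not be singular to it.
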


We will prove the different assertions of Theorem~\ref{T:EQU_MEASURES} in the subsequent sections. We start by introducing some additional notation. For $d \geq 1$, let $\Prob_d$ be the standard Gaussian law on $(\R^d, \B(\R^d))$, i.e., $\Prob_d = \bigotimes_{i=1}^d \Prob_1$, where $\Prob_1$ is the standard normal law on $\R$, and consider the probability space $(\Omega, \A, \Prob)$ with $\Omega = \bigotimes_{i=1}^\infty \R$, $\A = \bigotimes_{i=1}^\infty \B(\R)$, and $\Prob = \bigotimes_{i=1}^\infty \Prob_1$. 

We are only interested in the laws of $X$ and $X^{(A)}$ and might thus, without loss of generality, assume that they are defined on the probability space $(\Omega, \A, \Prob)$. Let $\{ f_1, f_2, \ldots \}$ be an orthonormal basis in the reduced Hilbert space $H^{(A)} \subset H$. We may write $X: \Omega \rightarrow C([0,T])$ as
\equ[E:EQU_MEAS_4]{ X(\omega) = X(\omega_1, \omega_2, \dots) = \sum_{i=1}^N \omega_i (ue_i) + \sum_{j=1}^\infty \omega_{j+N} (uf_j). }

\subsection{If~\eqref{E:EQU_MEAS_1} then $\Prob_X \ll \Prob_{X^{(A)}}$ on $\F_s$}

Consider $X^{(A)}: \Omega \rightarrow C([0,T])$ defined as
\equ[E:EQU_MEAS_5]{ X^{(A)}(\omega) = X^{(A)}(\omega_1, \omega_2, \dots) = \sum_{j=1}^\infty \omega_j (uf_j). }
Given the $e_i$'s as in~\eqref{E:EQU_MEAS_1} define $\xi_{ij} = \la e_i', f_j \ra$, $1 \leq i \leq N$, $j \geq 1$. Then $(\xi_{ij})_{i,j=1}^\infty$ fulfills
\[ \sum_{j=1}^\infty \xi_{ij}^2 < \infty, \quad \text{for all $i \geq 1$,} \]
and we have
\[ (u e_i) = (u e_i') = \sum_{j=1}^\infty \xi_{ij} (u f_j) \]
on $[0,s]$. Consider the mapping $M: \Omega \rightarrow \Omega$ defined by
\begin{align}
  \notag (\omega_1, \omega_2, \dots) &\mapsto (\omega_1', \omega_2', \dots), \\
  \label{E:EQU_MEAS_2} \omega_j' &= \sum_{i=1}^N \omega_i \xi_{ij} + \omega_{j+N}.
\end{align}
From~\eqref{E:EQU_MEAS_4}, \eqref{E:EQU_MEAS_5}, and~\eqref{E:EQU_MEAS_2}, we obtain on $[0,s]$
\[ X(\omega) = \sum_{j=1}^\infty \left( \sum_{i=1}^N \omega_i \xi_{ij} + \omega_{j+N}\right) (uf_j) = X^{(A)}(M(\omega)). \]

\begin{prop}\label{P:EQU_MEAS}
  For $F \in \A$ with $\Prob(F) > 0$ it holds $\Prob(M(F)) > 0$.
\end{prop}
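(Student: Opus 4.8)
The plan is to recognize $M$ as a random (in the first $N$ coordinates) Cameron--Martin shift acting on the remaining coordinates, and then to combine Fubini's theorem with the quasi-invariance of the Gaussian product measure under $l_2$-translations.

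First I would split the coordinates. Write $\Omega = \R^N \times \Omega'$, where $\alpha = (\omega_1, \ldots, \omega_N) \in \R^N$ collects the first $N$ coordinates and $\Omega'$ carries the remaining ones, relabelled by $\eta_j = \omega_{j+N}$, $j \geq 1$, so that $\Omega' \cong \Omega$. Under this splitting $\Prob = \Prob_N \otimes \Prob$, a product of the standard Gaussian law on $\R^N$ and the original product measure on the $\eta$-coordinates. With this notation the defining formula~\eqref{E:EQU_MEAS_2} reads
\[ M(\alpha, \eta) = \eta + h(\alpha), \qquad h(\alpha) = \Bigl( \sum_{i=1}^N \alpha_i \xi_{ij} \Bigr)_{j \geq 1}, \]
i.e.\ for each fixed $\alpha$ the map $M(\alpha, \cdot)$ is the translation $\eta \mapsto \eta + h(\alpha)$ on $\Omega$. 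An elementary Cauchy--Schwarz estimate together with~\eqref{E:EQU_MEAS_1b} shows $\sum_{j} h_j(\alpha)^2 \leq N \sum_{i=1}^N \alpha_i^2 \sum_j \xi_{ij}^2 < \infty$, so that $h(\alpha) \in l_2$ for every $\alpha$.

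Next I would invoke the Cameron--Martin theorem: for any $h \in l_2$ the translation $\eta \mapsto \eta + h$ transports $\Prob$ into an equivalent measure, with almost surely strictly positive density $\exp\bigl( \sum_j h_j \eta_j - \frac{1}{2} \sum_j h_j^2 \bigr)$; in particular it is a bimeasurable bijection carrying $\Prob$-positive sets to $\Prob$-positive sets. Then, since $\Prob(F) = \int_{\R^N} \Prob(F_\alpha) \, \Prob_N(d\alpha) > 0$ with $F_\alpha = \{ \eta : (\alpha, \eta) \in F \}$ the $\alpha$-section of $F$, Fubini's theorem yields some $\alpha_0 \in \R^N$ with $\Prob(F_{\alpha_0}) > 0$. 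For this $\alpha_0$ we have the inclusion
\[ M(F) \supseteq M(\{\alpha_0\} \times F_{\alpha_0}) = \{ \eta + h(\alpha_0) : \eta \in F_{\alpha_0} \}, \]
and the right-hand side is a measurable set of positive $\Prob$-measure by the Cameron--Martin step. Hence $M(F)$ contains a measurable set of positive measure, which gives $\Prob(M(F)) > 0$.

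The one point requiring care is the measurability of the image $M(F)$ itself, since images of measurable sets under measurable maps are in general not measurable. I would resolve this by noting that $M$ is continuous for the product topology on $\Omega$ (each output coordinate is a finite linear combination of input coordinates), so that $M(F)$ is analytic and therefore universally measurable; the monotonicity of $\Prob$ then applies directly. Alternatively one avoids the issue altogether by arguing at the level of inner measure, since the explicit positive-measure subset $\{ \eta + h(\alpha_0) : \eta \in F_{\alpha_0} \}$ already forces the inner measure of $M(F)$ to be positive. This measurability bookkeeping, rather than any analytic difficulty, is the only real obstacle.
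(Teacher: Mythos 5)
Your proof is correct and follows essentially the same route as the paper's: Fubini to extract a section $F_{\alpha_0}$ of positive measure, the Cauchy--Schwarz/Jensen estimate to place the shift $h(\alpha_0)$ in $l_2$, and the Cameron--Martin quasi-invariance of the Gaussian product measure to conclude that the translated section inside $M(F)$ has positive measure. The only addition is your explicit treatment of the measurability of $M(F)$ (via analyticity or inner measure), a point the paper passes over silently; this is a welcome refinement but does not change the argument.
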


\begin{proof}
  Let $F \in \A$ with $\Prob(F) > 0$. Note that $\R^N \times \Omega = \Omega$. For an element $x \in \R^N$ define
  \al{ F_x &= \{ y \in \Omega : (x,y) \in F \} \subset \Omega, \\
      F_x' &= \{ (x,y) \in \Omega : y \in F_x \} \subset \Omega. }
  Then
  \[ 0 < \Prob(F) = \int_{\R^N} \Prob(F_x) \, \Prob_N(dx) \]
  which implies the existence of a $z = (z_1, \ldots, z_N) \in \R^N$ with $\Prob(F_z) > 0$. Define the element $l(z) = (l_1, l_2, \ldots) \in \Omega$ by $l_j = \sum_{i=1}^N \xi_{ij} z_i$. By Jensen's inequality,
  \[ \sum_{j=1}^\infty l_j^2 = \sum_{j=1}^\infty \left(\sum_{i=1}^N \xi_{ij} z_i\right)^2 \leq N \sum_{j=1}^\infty \sum_{i=1}^N \xi_{ij}^2 z_i^2 = N \sum_{i=1}^N z_i^2 \sum_{j=1}^\infty \xi_{ij}^2 < \infty \]
  and thus $l(z) \in l_2$. Define $\tau_{l(z)}: \Omega \rightarrow \Omega$ by $\tau_{l(z)}(\omega) = \omega - l(z)$. Then, for the subset $F_z' \subset F$ it holds $M(F_z') = l(z) + F_z = \tau_{l(z)}^{-1}(F_z)$ and thus
  \equ[E:EQU_MEAS_3]{ \Prob(M(F)) \geq \Prob(M(F_z')) = \Prob(\tau_{l(z)}^{-1}(F_z)) = \Prob \circ \tau_{l(z)}^{-1}(F_z). }
  The probability space $(\Omega, \A, \Prob)$ is the canonical model for the Gaussian process $Z = (Z_n)_{n \in \N}$ with covariance $\E Z_m Z_n = \delta_{m,n}$, $m,n \in \N$. The Cameron-Martin space associated with $Z$ is $l_2$ and thus, since $l(z) \in l_2$, the probability measures $\Prob$ and $\Prob \circ \tau_{l(z)}^{-1}$ are equivalent (Theorem~14.17 in~\cite{Jan97}). Hence, since $\Prob(F_z) > 0$ we have, by~\eqref{E:EQU_MEAS_3},
  \[ \Prob(M(F)) \geq \Prob \circ \tau_{l(z)}^{-1}(F_z) > 0. \qedhere \]
\end{proof}

Now, let $F \in \F_s$ with $\Prob_X(F) = \Prob(X^{-1}(F)) > 0$. Since $M$ is surjective,
\al{ \Prob_{X^{(A)}}(F) = \Prob(\{ \omega' \in \Omega : X^{(A)}(\omega') \in F\}) = \Prob(\{ M(\omega) \in \Omega : \omega \in \Omega \text{ and } X^{(A)}(M(\omega)) \in F\}). }
Because of $F \in \F_s$ and $X^{(A)}(M(\omega)) = X(\omega)$ on $[0,s]$,
\al{ \Prob_{X^{(A)}}(F) = \Prob(\{ M(\omega) \in \Omega : \omega \in \Omega \text{ and } X(\omega) \in F\}) = \Prob(M(X^{-1}(F))). }
Since $\Prob(X^{-1}(F)) > 0$, Proposition~\ref{P:EQU_MEAS} yields $\Prob(M(X^{-1}(F))) > 0$ and thus $\Prob_{X^{(A)}}(F) > 0$. We thus have proven that~\eqref{E:EQU_MEAS_1} implies $\Prob_X \ll \Prob_{X^{(A)}}$ on $\F_s$.

\subsection{If~\eqref{E:EQU_MEAS_1} then $\Prob_{X^{(A)}} \ll \Prob_X$ on $\F_s$}

We proceed in a similar way as in the previous section. Consider $X^{(A)}: \Omega \rightarrow C([0,T])$ defined as
\equ[E:EQU_MEAS_5b]{ X^{(A)}(\omega) = X^{(A)}(\omega_1, \omega_2, \dots) = \sum_{j=1}^\infty \omega_{N+j} (uf_j). }
Let $\xi_{ij}$ be defined as before and consider the mapping $N: \Omega \rightarrow \Omega$ defined by
\begin{align}
  \notag (\omega_1, \omega_2, \dots) &\mapsto (\omega_1', \omega_2', \dots), \\
  \notag \omega_j' &= \omega_j, \qquad &\text{for $1 \leq j \leq N$,} \\
  \label{E:EQU_MEAS_2b} \omega_j' &= - \sum_{i=1}^N \omega_i \xi_{i(j-N)} + \omega_{j}, \qquad &\text{for $j \geq N+1$.}
\end{align}
From~\eqref{E:EQU_MEAS_4}, \eqref{E:EQU_MEAS_5b}, and~\eqref{E:EQU_MEAS_2b}, we obtain on $[0,s]$
\[ X(N(\omega)) =  \sum_{i=1}^N \omega_i (ue_i) + \sum_{j=1}^\infty \left(- \sum_{i=1}^N \omega_i \xi_{ij} + \omega_{N+j}\right) (uf_j) = X^{(A)}(\omega). \]

\begin{prop}\label{P:EQU_MEASb}
  For $F \in \A$ with $\Prob(F) > 0$ it holds $\Prob(N(F)) > 0$.
\end{prop}

\begin{proof}
  With the notation of the proof of Proposition~\ref{P:EQU_MEAS}, we have
  \[ 0 < \Prob(F) = \int_{\R^N} \Prob(F_x) \, \Prob_N(dx) = \int_{\{x \in \R^N : \ \Prob(F_x) > 0 \}} \Prob(F_x) \, \Prob_N(dx). \]
  By the Cameron-Martin Theorem it follows for every $x \in \R^d$ with $\Prob(F_x) > 0$ that $\Prob \circ \tau_{-l(x)}^{-1} (F_x) > 0$ and thus
  \[ 0 < \int_{\{x \in \R^N : \ \Prob(F_x) > 0 \}} \Prob \circ \tau_{-l(x)}^{-1} (F_x)) \, \Prob_N(dx) \leq \Prob(N(F)). \qedhere \]
\end{proof}

Now, let $F \in \F_s$ with $\Prob_{X^{(A)}}(F) = \Prob([X^{(A)}]^{-1}(F)) > 0$. This implies by Proposition~\ref{P:EQU_MEASb} that $\Prob(N([X^{(A)}]^{-1}(F))) > 0$. Because of $F \in \F_s$ and $X(N(\omega)) = X^{(A)}(\omega)$ on $[0,s]$, it follows that $N([X^{(A)}]^{-1}(F)) \subset X^{-1}(F)$ and thus $\Prob(X^{-1}(F)) = \Prob_X(F) > 0$. We thus have proven that~\eqref{E:EQU_MEAS_1} implies $\Prob_{X^{(A)}} \ll \Prob_X$ on $\F_s$.

\subsection{If not~\eqref{E:EQU_MEAS_1} then $\Prob_X$ and $\Prob_{X^{(A)}}$ are orthogonal on $\F_s$}
  We assume that $N=1$. By doing so we do not loose any generality since we could as well impose the conditions one by one and build in this way a cascade of on $\F_s$ equivalent measures. Fix $0 \leq s \leq T$ and set $e = e_1$. Define $u_s: H \rightarrow C([0,s])$ by $(u_sh)(x) = (uh)(x)$, $h \in H$, $x \in [0,s]$, and assume that for all $e' \in H^{(A)}$ there is an $x \in [0,s]$ such that $(ue)(x) \neq (ue')(x)$ which implies $e-e' \notin \ker(u_s)$, the kernel of $u_s$. Since elements in $H^{(A)}$ are orthogonal to $e$ and $e \notin \ker(u_s)$, it follows that $e$ is orthogonal to $\ker(u_s) \subset H$. The orthogonal complement of $\ker(u_s)$ is equal to the closed image of the adjoint operator $u_s^*$. This implies that there is a sequence of functionals $(b_n)_{n=1}^\infty \subset C([0,s])^*$ such that $u_s^* b_n \rightarrow e$ and $(u_s^{(A)})^* b_n \rightarrow 0$, where $u_s^{(A)}: H^{(A)} \rightarrow C([0,s])$ is the restriction of $u_s$ to $H^{(A)}$. We may assume that $\| (u_s^{(A)})^* b_n \| \leq 2^{-n}$ (by choosing a suitable sub-sequence of $(b_n)_{n=1}^\infty$ if necessary). Now set $\tilde b_n = b_n / \|(u_s^{(A)})^* b_n\|^{1/2}$. Then $\tilde b_n(X)$ and $\tilde b_n(X^{(A)})$ are Gaussian random variables, which, by~\eqref{E:CP_4}, satisfy
  \[ \E \left[ \tilde b_n(X)^2 \right] = \| u^* b_n \|^2 / \|(u_s^{(A)})^* b_n\| \rightarrow \infty, \]
  as $n \rightarrow \infty$, and
  \[ \E \left[ \tilde b_n(X^{(A)})^2 \right] = \| (u^{(A)})^* b_n \|^2 / \|(u_s^{(A)})^* b_n\| \leq 2^{-n}. \]
  From this it follows by the Borel-Cantelli Lemma that, almost surely, $\limsup_{n \rightarrow \infty} |\tilde b_n(X)| = \infty$ and $\lim_{n \rightarrow \infty} |\tilde b_n(X^{(A)})| = 0$. Hence, $X$ and $X^{(A)}$ induce orthogonal laws on $C([0, s])$ which implies that $\Prob_X$ and $\Prob_{X^{(A)}}$ are orthogonal on $\F_s$.

%


\section{Non-anticipative representations}\label{S:NON_ANTI}

Now, we consider alternative, non-anticipative representations for $X^{(A)}$ in the same setting as in the previous section. We assume that the supremum over all $0 \leq s \leq T$ for which~\eqref{E:EQU_MEAS_1} holds is $T$. If this is not the case, the following calculations can only be performed on an interval $[0, T_*) \subset [0,T)$.

Recall that a progressively measurable functional on $C([0,T])$ is a mapping $\beta: [0, T] \times C([0,T]) \rightarrow \R$ such that for each $0 \leq s \leq T$, the restriction of $\beta$ to $[0,s] \times C([0,T])$ is $\B([0, s]) \otimes \F_s$-$\B(\R)$-measurable.

Let $W=(W_s)_{s \in [0,T]}$ be a standard linear Brownian motion defined on a probability space $(\Omega, \A, \Prob)$ and assume that there is a $0 \neq \alpha \in \R$ and a progressively measurable functional $\beta$ on $C([0,T])$ with
\equ[E:NON_ANTI_2]{ \int_0^S |\beta(x,X)| \, dx < \infty }
$\Prob$-almost surely for all $S < T$, such that $X$ is a (strong) solution of the stochastic differential equation
\equ[E:NON_ANTI_1]{ dX_s = \alpha dW_s + \beta(s,X) ds, \qquad X_0 = 0, \qquad 0 \leq s < T. }

In order to apply the results from the previous section, it proves to be useful to assume without loss of generality $\Omega = C([0,T])$ (recall that we do not distinguish between Gaussian processes with the same law): by~\eqref{E:NON_ANTI_1} and since $\alpha \neq 0$,
  \[ W_s = \alpha^{-1} X_s - \alpha^{-1} \int_0^s \beta(x,X) \, dx, \qquad 0 \leq s < T. \]
  Let $\Prob_X$ be the induced measure of $X$ on the space $(C([0,T]), \mathcal{C})$. Define the processes $\widehat{X}: (C([0,T]), \mathcal{C}) \rightarrow (C([0,T]), \mathcal{C})$ and $\widehat{W}: (C([0,T]), \mathcal{C}) \rightarrow (C([0,T]), \mathcal{C})$ by $(\widehat{X} f)(s) = f(s)$ and
  \[ (\widehat{W} f)(s) = \alpha^{-1} (\widehat{X} f)(s) - \alpha^{-1} \int_0^s \beta(x,\widehat{X} f) \, dx \]
  for $0 \leq s < T$ and $f \in C([0,T])$. Then, on $(C([0,T]), \mathcal{C}, \Prob_X)$, $\widehat{W}$ is a standard Brownian motion, $\widehat{X} = X$ in distribution, and we have
  \[ d\widehat{X}_s = \alpha d\widehat{W}_s + \beta(s,\widehat{X}) ds, \qquad \widehat{X}_0 = 0, \qquad 0 \leq s < T\]
  with
  \[ \int_0^S |\beta(x,\widehat{X})| \, dx < \infty \]
  $\Prob_X$-almost surely for all $S < T$. From the construction follows that the natural filtration of $\widehat{W}$ and $\widehat{X}$ is $\F$.

\subsection{Existence of a describing SDE}

Let $\Prob_{X^{(A)}}$ be the induced measure of $X^{(A)}$ on $(C([0,T]), \mathcal{C})$.

\begin{theorem}\label{T:SDE_EX}
  There is a Brownian motion $W' = (W_s')_{s \in [0,T]}$ defined on the probability space $(C([0,T]), \mathcal{C}, \Prob_{X^{(A)}})$ and a progressively measurable functional $\delta$ on $C([0,T])$ with
  \equ[E:NON_ANTI_6]{ \int_0^S |\delta(x, X^{(A)})| \, dx < \infty }
  $\Prob_{X^{(A)}}$-almost surely for all $S < T$ such that the conditioned process $X^{(A)}$ is a (strong) solution of the stochastic differential equation
  \equ[E:NON_ANTI_4]{ d X^{(A)}_s = \alpha dW'_s + \delta(s, X^{(A)}) ds, \qquad X^{(A)}_0 = 0, \qquad 0 \leq s < T. }
\end{theorem}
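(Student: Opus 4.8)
The plan is to apply Girsanov's theorem, with the change of measure governed by the equivalence of $\Prob_X$ and $\Prob_{X^{(A)}}$ on $\F_s$ established in Theorem~\ref{T:EQU_MEASURES}. I would work throughout on the canonical space $(C([0,T]), \mathcal{C})$ arranged as in the preamble to this section, where the coordinate process $\widehat{X}$ satisfies $d\widehat{X}_s = \alpha\, d\widehat{W}_s + \beta(s,\widehat{X})\,ds$ under $\Prob_X$ and $\widehat{W}$ is a $\Prob_X$-Brownian motion whose natural filtration is $\F$. By the standing assumption of this section the condition~\eqref{E:EQU_MEAS_1} holds up to the boundary, so Theorem~\ref{T:EQU_MEASURES} guarantees that $\Prob_{X^{(A)}}$ and $\Prob_X$ are equivalent on $\F_s$ for every $s < T$.

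First I would introduce the density process $Z_s = d\Prob_{X^{(A)}}/d\Prob_X|_{\F_s}$, which is a strictly positive $(\F, \Prob_X)$-martingale on $[0,T)$: positivity is equivalence, and consistency $\E_{\Prob_X}[Z_S \mid \F_s] = Z_s$ for $s < S < T$ is the usual compatibility of Radon--Nikodym derivatives along a filtration. Since under $\Prob_X$ the filtration $\F$ is (the augmentation of) the Brownian filtration of $\widehat{W}$, the martingale representation theorem forces $Z$ to be continuous and to admit the stochastic-exponential form
\[ Z_s = \exp\left( \int_0^s \gamma_x \, d\widehat{W}_x - \tfrac{1}{2} \int_0^s \gamma_x^2 \, dx \right) \]
for a progressively measurable $\gamma$ with $\int_0^s \gamma_x^2\,dx < \infty$ $\Prob_X$-almost surely, for each $s < T$. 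Girsanov's theorem then gives, for every $S < T$, that $W'_s = \widehat{W}_s - \int_0^s \gamma_x\, dx$ is a Brownian motion on $[0,S]$ under $\Prob_{X^{(A)}}$. Substituting $d\widehat{W}_s = dW'_s + \gamma_s\,ds$ into the equation for $\widehat{X}$ yields
\[ d\widehat{X}_s = \alpha\, dW'_s + \bigl( \beta(s,\widehat{X}) + \alpha\gamma_s \bigr)\,ds, \]
so that setting $\delta(s,\cdot) = \beta(s,\cdot) + \alpha\gamma_s$ and recalling $\widehat{X} = X^{(A)}$ in law under $\Prob_{X^{(A)}}$ produces~\eqref{E:NON_ANTI_4}. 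The functional $\delta$ is progressively measurable as a sum of the given $\beta$ and the adapted process $\alpha\gamma$, and~\eqref{E:NON_ANTI_6} follows because $\int_0^S|\beta(x,X^{(A)})|\,dx < \infty$ transfers from~\eqref{E:NON_ANTI_2} by equivalence on $\F_S$, while $\int_0^S|\gamma_x|\,dx \leq \sqrt{S}\,(\int_0^S \gamma_x^2\,dx)^{1/2} < \infty$ holds $\Prob_{X^{(A)}}$-almost surely, again by equivalence on $\F_S$.

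The main obstacle is that the equivalence of Theorem~\ref{T:EQU_MEASURES} genuinely fails at $s = T$: on $\mathcal{C} = \F_T$ the two laws are mutually singular, and the density $Z_s$ is expected to explode as $s \uparrow T$. Consequently Girsanov cannot be invoked on all of $[0,T]$ at once; instead I would carry out the construction on each $[0,S]$ with $S < T$ and verify that the resulting objects are consistent as $S \uparrow T$, so that $\gamma$ and $\delta$ patch into progressively measurable functionals on $[0,T)$ and $W'$, being a Brownian motion on every $[0,S]$, extends $\Prob_{X^{(A)}}$-almost surely to a Brownian motion on all of $[0,T]$; note that only $0 \leq s < T$ is needed in~\eqref{E:NON_ANTI_4}. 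A secondary technical point is the mismatch between the raw filtration $\F$ demanded for the progressive measurability of $\delta$ and the augmented filtration required by the martingale representation theorem; here one checks that $\gamma$ may be chosen $\F$-progressively measurable up to a null modification, so that $\delta$ is a genuine functional of the path as claimed.
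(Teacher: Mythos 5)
Your argument is correct and is essentially the paper's own proof: both rest on the equivalence of $\Prob_X$ and $\Prob_{X^{(A)}}$ on $\F_s$ from Theorem~\ref{T:EQU_MEASURES}, the density martingale $Z$, the martingale representation theorem in the Brownian filtration of $\widehat{W}$, and Girsanov's theorem, the only cosmetic difference being that you write $Z$ as a stochastic exponential while the paper writes $Z_s=\int_0^s\gamma(x)\,dW_x$ and carries the factor $Z_x^{-1}\gamma(x)$ through the drift. Your explicit localization to $[0,S]$ with $S<T$ and the Cauchy--Schwarz bound for $\int_0^S|\gamma_x|\,dx$ are slightly more careful than the paper's terse treatment of the same points, but they do not change the substance of the argument.
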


\begin{proof}
  We consider the mapping $Y: (C([0,T]), \mathcal{C}) \rightarrow (C([0,T]), \mathcal{C})$ defined by $Y(f) = f$ for $f \in C([0,T])$. Then, under the measure $\Prob_X$, the law of $Y$ is the same as the law of $X$ and under the measure $\Prob_{X^{(A)}}$, the law of $Y$ is the same as the law of $X^{(A)}$. Under $\Prob_X$, the semimartingale $Y=(Y_s)_{s \in [0,T)}$ has the decomposition $Y = M + A$, where $M$ is a continuous martingale and $A$ a finite variation process,
  \[ M_s = \alpha W_s, \qquad A_s = \int_0^s \beta(x, Y) \, dx. \]
  By Theorem~\ref{T:EQU_MEASURES} the measures $\Prob_X$ and $\Prob_{X^{(A)}}$ are equivalent on $\F_s$ for all $0 \leq s < T$. Hence,
  \[ Z_s = \E_{\Prob_X} \left[ \frac{d\Prob_{X^{(A)}}}{d\Prob_X} \, \Big| \, \F_s \right], \quad 0 \leq s < T, \]
  is an almost sure non-negative continuous $(\Prob_X, \F)$-martingale. By Girsanov's Theorem (see e.g.\ Theorem~III.35 in~\cite{Pro05}), $Y$ is a semimartingale under $\Prob_{X^{(A)}}$ with decomposition $Y = L + C$ with
  \equ[E:NON_ANTI_5]{ L_s = M_s - \int_0^s Z_x^{-1} \, d[Z,M]_x }
  being a local martingale under $\Prob_{X^{(A)}}$, where $[Z,M]$ denotes the quadratic covariation process of $M$ and $Z$, and $C = Y - L$ is a $\Prob_{X^{(A)}}$-finite variation process. By the martingale representation theorem (see e.g.~Theorem~4.3.4 in~\cite{Oks07}) there is an adapted stochastic process $\gamma$ such that
  \[ Z_s = \int_0^s \gamma(x) \, dW_x \qquad \text{and} \qquad \E_{\Prob_X} \left[ \int_0^{s} \gamma^2(x) \, dx \right] < \infty. \]
  Since $M = \alpha W$ it follows $d[Z,M]_x = \alpha \gamma(x) dx$ under $\Prob_X$ and thus under $\Prob_{X^{(A)}}$. Hence, by~\eqref{E:NON_ANTI_5},
  \al{ Y_s &= L_s + (Y_s - L_s) = L_s + (M_s + A_s - L_s) \\
           &= \alpha \left( W_s - \int_0^s Z_x^{-1} \gamma(x) \, dx \right) + \left( \int_0^s \left( \alpha Z_x^{-1} \gamma(x) + \beta(x, X) \right) \, dx \right). }
  The quadratic variation process of the first bracket is $s$ under $\Prob_X$ and thus under $\Prob_{X^{(A)}}$. By L\'evy's characterization of Brownian motion,
  \equ[E:NON_ANTI_5b]{ W'_s = W_s - \int_0^s Z_x^{-1} \gamma(x) \, dx }
  is a Brownian motion under $\Prob_{X^{(A)}}$. That is,
  \[ Y_s = \alpha W'_s + \int_0^s ( \alpha Z_x^{-1} \gamma(x) + \beta(x, Y) ) \, dx, \qquad 0 \leq s < T. \]
  Since the natural filtration of $Y$ is $\F$ and the process $( \alpha Z_x^{-1} \gamma(x) + \beta(x, Y) )_{0 \leq s < T}$ is adapted to this filtration we have
  \[ \alpha Z_x^{-1} \gamma(x) + \beta(x, Y) = \delta(x, Y) \]
  for some progressively measurable functional $\delta$ on $C([0,T])$. Moreover, from~\eqref{E:NON_ANTI_2} and~\eqref{E:NON_ANTI_5b} it follows
  \[ \int_0^S |\delta(x, Y)| \, dx < \infty \]
  $\Prob_{X^{(A)}}$-almost surely for all $S < T$.
\end{proof}

\subsection{Determination of the drift}

Theorem~\ref{T:SDE_EX} provides us with a progressively measurable functional $\delta$ on $C([0,T])$ for which 
\[ \int_0^S |\delta(x, X^{(A)})| \, dx < \infty \]
almost surely for all $S < T$. But in the following we need more than this.

\begin{prop}\label{P:FINITE_INT}
  The progressively measurable functional $\delta$ in Theorem~\ref{T:SDE_EX} satisfies
  \[ \E \int_0^S |\delta(x, X^{(A)})| \, dx < \infty, \quad S < T. \]
\end{prop}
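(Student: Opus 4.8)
The plan is to identify $\int_0^S |\delta(x,X^{(A)})|\,dx$ with the total variation of the finite--variation part in the semimartingale decomposition of $X^{(A)}$ under $\Prob_{X^{(A)}}$, to show that this finite--variation part is a \emph{Gaussian} process, and then to upgrade the almost sure finiteness from~\eqref{E:NON_ANTI_6} to finiteness in expectation by invoking the integrability of suprema of centered Gaussian families.

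I would work on $(C([0,T]),\mathcal{C},\Prob_{X^{(A)}})$ with the canonical process $Y$, $Y(f)=f$, which under $\Prob_{X^{(A)}}$ is a version of $X^{(A)}$ and, by Theorem~\ref{T:SDE_EX}, satisfies $Y=\alpha W'+C$ with $C_s=\int_0^s\delta(x,Y)\,dx$. Since by~\eqref{E:NON_ANTI_6} the map $x\mapsto\delta(x,Y)$ is $\Prob_{X^{(A)}}$-a.s.\ integrable on $[0,S]$, the path $s\mapsto C_s$ is absolutely continuous and its total variation on $[0,S]$ equals $\int_0^S|\delta(x,Y)|\,dx$. Using continuity of $C$, the total variation may be written as a supremum over rational partitions $0=t_0<\dots<t_n=S$ and sign patterns $\varepsilon_i\in\{-1,1\}$,
\[ \int_0^S |\delta(x,Y)|\,dx \;=\; \sup\; \sum_{i} \varepsilon_i\,(C_{t_{i+1}}-C_{t_i}), \]
so that the left--hand side is a \emph{countable} supremum of random variables that are finite linear combinations of increments of $C$.

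The key step is to show that $C$ is a Gaussian process, so that each $\sum_i\varepsilon_i(C_{t_{i+1}}-C_{t_i})$ is a centered Gaussian variable and the supremum above is that of a countable centered Gaussian family. Under $\Prob_{X^{(A)}}$ the process $Y$ is a continuous centered Gaussian process with natural filtration $\F$, and the finite variation part is recovered as the limit in probability (along partitions with vanishing mesh) of the predictable sums $\sum_{t_i\le s}\E_{\Prob_{X^{(A)}}}[\,Y_{t_{i+1}}-Y_{t_i}\mid\F_{t_i}\,]$. In a Gaussian family conditional expectation is orthogonal projection, so every term $\E_{\Prob_{X^{(A)}}}[Y_{t_{i+1}}-Y_{t_i}\mid\F_{t_i}]$ is centered Gaussian; hence each approximating sum is centered Gaussian, and the a.s.\ finite limit $C_s$ is again centered Gaussian. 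The same argument applied to finitely many indices shows that the increments $C_{t_{i+1}}-C_{t_i}$ are jointly centered Gaussian, so the whole family in the display is jointly centered Gaussian. (Alternatively, one may simply cite that the bounded variation component of a continuous Gaussian semimartingale is a Gaussian process.)

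Finally, by~\eqref{E:NON_ANTI_6} the supremum in the display is $\Prob_{X^{(A)}}$-a.s.\ finite. By the integrability of suprema of Gaussian families---an a.s.\ finite supremum of a countable centered Gaussian family is automatically integrable (see Chapter~3 in~\cite{Jan97})---we obtain $\E\int_0^S|\delta(x,X^{(A)})|\,dx=\E_{\Prob_{X^{(A)}}}\int_0^S|\delta(x,Y)|\,dx<\infty$, which is the claim. I expect the Gaussianity of the finite variation part $C$ to be the main obstacle; once it is established, the passage from almost sure finiteness to finiteness in expectation is exactly the classical integrability of Gaussian suprema.
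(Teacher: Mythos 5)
Your route is genuinely different from the paper's, and it is essentially sound, but the step you yourself flag as the main obstacle is where the two arguments diverge in what they require. The paper does \emph{not} work with the finite-variation part $C$ as a process; it only establishes that, for each fixed $x$, the single random variable $\delta(x,X^{(A)})=\lim_{\eps\searrow 0}\eps^{-1}(C_{x+\eps}-C_x)$ is (marginally) Gaussian, with variance $\sigma^2(x)$. From there it is entirely elementary: using $\E|\delta_n(x)|=\sqrt{2/\pi}\,\sigma_n(x)$ for truncations $\delta_n$, Cauchy--Schwarz to bound $\V Z_n\le(1-2/\pi)\bigl(\int_0^S\sigma_n\bigr)^2$, and Chebyshev, it shows $\Prob\bigl(Z\le\eps\int_0^S\sigma_n\bigr)\le c<1$ uniformly in $n$; monotone convergence then forces $\int_0^S\sigma(x)\,dx<\infty$ from the almost sure finiteness of $Z=\int_0^S|\delta|$, and $\E Z=\sqrt{2/\pi}\int_0^S\sigma$ finishes. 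Your argument instead needs the increments of $C$ to be \emph{jointly} centered Gaussian, so that the total variation becomes a countable supremum of a centered Gaussian family and Fernique/Landau--Shepp integrability applies. That is a strictly stronger input, and your proposed justification of it is the weak point: the convergence of the discrete predictable sums $\sum_i\E[Y_{t_{i+1}}-Y_{t_i}\mid\F_{t_i}]$ to $C_s$ is not automatic for a general continuous semimartingale --- the standard proofs of that approximation already assume the finite-variation part has integrable variation (quasimartingale/class (D) hypotheses), which is exactly what you are trying to prove, so there is a circularity risk. Your fallback, citing that the bounded-variation component of a continuous Gaussian semimartingale is Gaussian, is legitimate (this is Stricker's theorem on Gaussian semimartingales), but that theorem in its full form already delivers integrable variation, at which point your whole argument is subsumed by the citation.

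To be fair, the paper's own assertion that $\delta(x,X^{(A)})$ is Gaussian ``as the limit of Gaussian random variables'' quietly uses that $X^{(A)}_{x+\eps}-X^{(A)}_x$ and $W'_{x+\eps}-W'_x$ are \emph{jointly} Gaussian, i.e.\ that the innovation Brownian motion $W'$ lies in the Gaussian space of $X^{(A)}$; so both proofs ultimately lean on the same unproved joint-Gaussianity fact, yours just needs more of it. What your approach buys, once Gaussianity of $C$ is granted, is a one-line conclusion with exponential integrability of $\int_0^S|\delta|$ rather than mere finiteness of the first moment; what the paper's approach buys is that it gets by with only one-dimensional marginal Gaussianity and otherwise completely elementary estimates.
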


\begin{proof}
  From~\eqref{E:NON_ANTI_6} we know $|\delta(s, X^{(A)})| < \infty$ almost surely for almost all $0 \leq s \leq S$ and thus the limit in
  \al{ \delta(s, X^{(A)}) &= \lim_{\eps \searrow 0} \eps^{-1} \int_s^{s+\eps} \delta(x, X^{(A)}) \, dx = \lim_{\eps \searrow 0} \eps^{-1} \left( X^{(A)}_{s+\eps} - X^{(A)}_s - \alpha W'_{s+\eps} + \alpha W'_s \right) }
  exists and is, as the limit of Gaussian random variables, a Gaussian random variable.

  Let $\sigma^2(x) = \E |\delta(x, X^{(A)})|^2$  be the variance of $\delta(x, X^{(A)})$ and for $n \in \N$ set $\delta_n(x) = \min\{ 1, n/\sigma(x) \} \delta(x, X^{(A)})$. Then
  \[ \sigma_n^2(x) = \E |\delta_n(x)|^2 = \min\{ \sigma^2(x), n^2 \} \leq n^2 \]
  and $\sigma_n^2(x) \nearrow \sigma^2(x)$ for all $x$ as $n \rightarrow \infty$. Since $\delta_n(x)$ is Gaussian we have $\E |\delta_n(x)| = \sqrt{2/\pi} \sigma_n(x)$ and by the Cauchy-Schwartz inequality
  \[ \E |\delta_n(x) \delta_n(y)| \leq \sqrt{ \E |\delta_n(x)|^2 \E |\delta_n(y)|^2 } = \sigma_n(x) \sigma_n(y). \]
  Define 
  \[ Z = \int_0^S |\delta(x, X^{(A)})| \, dx \qquad \text{and} \qquad Z_n = \int_0^S |\delta_n(x)| \, dx. \]
  Then we have $Z_n \leq Z$ and
  \[ \E Z_n = \int_0^S \E |\delta_n(x)| \, dx = \sqrt{\frac{2}{\pi}} \int_0^S \sigma_n(x) \, dx \nearrow \sqrt{\frac{2}{\pi}} \int_0^S \sigma(x) \, dx \]
  as $n \rightarrow \infty$. Moreover,
  \al{ \E Z_n^2 &= \E \int_0^S \int_0^S \delta_n(x) \delta_n(y) \, dx \, dy
         \leq \int_0^S \int_0^S \E | \delta_n(x) \delta_n(y) | \, dx \, dy \\
         &\leq \int_0^S \int_0^S \sigma_n(x) \sigma_n(y) \, dx \, dy
         = \left(\int_0^S \sigma_n(x) dx \right)^2. }
  Thus, for the variance $\V Z_n = \E Z_n^2 - (\E Z_n)^2$,
  \al{ \V Z_n &= \left(\int_0^S \sigma_n(x) \, dx \right)^2 - \left( \sqrt{\frac{2}{\pi}} \int_0^S \sigma_n(x) \, dx \right)^2 = (1 - 2/\pi)\left(\int_0^S \sigma_n(x) \, dx \right)^2. }
  Since $Z_n \leq Z$ it follows for $\eps > 0$ 
  \al{ \Prob\left(Z \leq \eps \int_0^S \sigma_n(x) \, dx \right) &\leq \Prob\left(Z_n \leq \eps \int_0^S \sigma_n(x) \, dx \right) \\
         &= \Prob\left(\E Z_n - Z_n \geq \E Z_n - \eps \int_0^S \sigma_n(x) \, dx \right) \\
         &\leq \Prob\left(|\E Z_n - Z_n| \geq (\sqrt{2/\pi} - \eps ) \int_0^S \sigma_n(x) \, dx \right). }
  By Chebyshev's inequality,
  \al{ \Prob\left(Z \leq \eps \int_0^S \sigma_n(x) \, dx \right) &\leq \frac{\V Z_n}{(\sqrt{2/\pi} - \eps )^2 \left(\int_0^S \sigma_n(x) \, dx \right)^2} \\
         &= \frac{(1 - 2/\pi)\left(\int_0^S \sigma_n(x) \, dx \right)^2}{(\sqrt{2/\pi} - \eps )^2 \left(\int_0^S \sigma_n(x) \, dx \right)^2} \\
         &= \frac{1 - 2/\pi}{(\sqrt{2/\pi} - \eps )^2}. }
  Thus, for $\eps > 0$ small enough,
  \[ \Prob\left(Z \leq \eps \int_0^S \sigma_n(x) \, dx \right) \leq c < 1. \]
  Note that the constant $c$ depends only on $\eps$ but not on $n \in \N$. Hence, taking the limit $n \rightarrow \infty$, we obtain by the monotone convergence theorem
  \al{ 0 < \Prob\left(Z > \eps \int_0^S \sigma(x) \, dx \right) =  \Prob\left(\eps^{-1} \int_0^S |\delta(x, X^{(A)})| \, dx > \int_0^S \sigma(x) \, dx \right). }
  Since $\int_0^S |\delta(x, X^{(A)})| \, dx < \infty$ almost surely it follows $\int_0^S \sigma(x) \, dx < \infty$ and finally
  \[ \E \int_0^S |\delta(x, X^{(A)})| \, dx = \sqrt{\frac{2}{\pi}} \int_0^S \sigma(x) \, dx < \infty. \qedhere \]
\end{proof}

\begin{theorem}\label{T:DRIFT}
  Almost surely, for almost all\, $0 \leq s < T$, the drift term $\delta(s,X^{(A)})$ in Theorem~\ref{T:SDE_EX} is
  \[ \delta(s,X^{(A)}) = \lim_{r \searrow 0} \frac{\E [ X^{(A)}_{s+r} \mid \F_s] - X_s^{(A)}}{r}. \]
\end{theorem}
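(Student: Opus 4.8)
The plan is to read the drift directly off the SDE~\eqref{E:NON_ANTI_4} by taking a conditional expectation and then differentiating. Writing $\phi(x) = \delta(x, X^{(A)})$ for brevity and integrating~\eqref{E:NON_ANTI_4} over $[s, s+r]$ gives
\[ X^{(A)}_{s+r} - X^{(A)}_s = \alpha (W'_{s+r} - W'_s) + \int_s^{s+r} \phi(x) \, dx. \]
Since $W'$ is, by the proof of Theorem~\ref{T:SDE_EX}, a Brownian motion for the filtration $\F$ under $\Prob_{X^{(A)}}$, the increment $W'_{s+r} - W'_s$ is independent of $\F_s$ and hence has vanishing $\F_s$-conditional expectation. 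Conditioning on $\F_s$ therefore leaves
\[ \E[X^{(A)}_{s+r} \mid \F_s] - X^{(A)}_s = \E\left[ \int_s^{s+r} \phi(x) \, dx \,\Big|\, \F_s \right], \]
so that, after dividing by $r$, the theorem reduces to showing that the right-hand side divided by $r$ converges to $\phi(s)$ as $r \searrow 0$, for almost all $s$ and almost surely.

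First I would invoke the integrability established in Proposition~\ref{P:FINITE_INT}, namely $\E \int_0^S |\phi(x)| \, dx < \infty$ for $S < T$, to justify a conditional Fubini step, rewriting the conditional expectation of the integral as $\int_s^{s+r} \E[\phi(x) \mid \F_s] \, dx$ and guaranteeing that all quantities appearing are finite almost surely. Since $\delta$ is progressively measurable, $\phi(s)$ is $\F_s$-measurable and hence $\E[\phi(s) \mid \F_s] = \phi(s)$; subtracting this identity, the claim becomes
\[ \frac{1}{r} \int_s^{s+r} \E[\phi(x) - \phi(s) \mid \F_s] \, dx \longrightarrow 0. \]

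The main obstacle is that the conditional expectation $\E[\cdot \mid \F_s]$ must be interchanged with the $r \searrow 0$ averaging limit, while the conditioning $\sigma$-algebra $\F_s$ itself moves with $s$, so a naive pointwise Lebesgue differentiation argument does not directly apply. I would circumvent this by working in $L^1(\Prob_{X^{(A)}} \otimes dx)$ on $\Omega \times [0,S]$ rather than pointwise. Writing $g_r(s) = r^{-1} \int_s^{s+r} \phi(x) \, dx$, the left-hand side of the reduced claim equals $\E[g_r(s) - \phi(s) \mid \F_s]$, and the contraction property of conditional expectation gives the estimate $\E | \E[g_r(s) - \phi(s) \mid \F_s] | \le \E | g_r(s) - \phi(s) |$. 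Integrating over $s \in [0,S]$ and invoking the $L^1$ form of the Lebesgue differentiation theorem (continuity of the one-sided averaging operator, valid because $\phi \in L^1(\Prob_{X^{(A)}} \otimes dx)$ by Proposition~\ref{P:FINITE_INT}, taking $r$ small enough that $s+r$ stays below some $S' < T$) shows that this bound tends to $0$ as $r \searrow 0$. This establishes $L^1$-convergence of $r^{-1}(\E[X^{(A)}_{s+r}\mid\F_s] - X^{(A)}_s)$ to $\phi(s)$ on $\Omega \times [0,S]$; passing to a subsequence $r_n \searrow 0$ then yields the stated almost sure, almost everywhere identification of the limit with $\delta(s, X^{(A)})$. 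As $\delta$ is in any case determined only up to $\Prob_{X^{(A)}} \otimes dx$-null sets, this is precisely the required conclusion.
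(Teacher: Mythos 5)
Your setup coincides with the paper's own proof: both arguments integrate~\eqref{E:NON_ANTI_4} over $[s,s+r]$, kill the Brownian increment by conditioning on $\F_s$, and invoke Proposition~\ref{P:FINITE_INT} to justify the Fubini step $\E[\int_s^{s+r}\delta(x,X^{(A)})\,dx\mid\F_s]=\int_s^{s+r}\E[\delta(x,X^{(A)})\mid\F_s]\,dx$. Where you diverge is the final differentiation. The paper simply cites a Lebesgue differentiation theorem to conclude $r^{-1}\int_s^{s+r}\E[\delta(x,X^{(A)})\mid\F_s]\,dx\to\E[\delta(s,X^{(A)})\mid\F_s]=\delta(s,X^{(A)})$ for almost all $s$; as you correctly observe, this is not a literal application of that theorem, because the integrand depends on $s$ through the conditioning $\sigma$-algebra, so the ``almost every $s$ is a Lebesgue point'' quantifier does not transfer automatically. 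Your replacement --- the contraction property of conditional expectation combined with $L^1$-continuity of the one-sided averaging operator on $L^1(\Prob_{X^{(A)}}\otimes ds)$, with $2\int_0^{S'}|\delta(x,X^{(A)})|\,dx$ as the dominating function supplied by Proposition~\ref{P:FINITE_INT} --- is sound and is arguably the more careful way to handle this step.

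The one place where your conclusion falls short of the literal statement is the passage to a subsequence: $L^1(\Prob_{X^{(A)}}\otimes ds)$ convergence yields almost sure, almost everywhere convergence only along some $r_n\searrow 0$, whereas the theorem asserts the full limit $\lim_{r\searrow 0}$. For what the theorem is used for --- identifying $\delta$ up to $\Prob_{X^{(A)}}\otimes ds$-null sets so that it can be computed from $\E[X^{(A)}_{s+r}\mid\F_s]$ in the examples of Section~\ref{S:EXAMPLES} --- the subsequential version suffices, and your closing remark makes exactly this point. But if the statement as written is to be proved, the existence of the limit along every sequence $r\searrow 0$ still needs a separate argument; neither your write-up nor, strictly speaking, the paper's one-line citation supplies one.
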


\begin{proof}
  Let $s \geq 0$ be fixed. By~\eqref{E:NON_ANTI_4}, for $r > 0$,
  \[ X^{(A)}_{s+r} = X^{(A)}_s + \alpha W'_{s+r} - \alpha W'_s + \int_s^{s+r} \delta(x,X^{(A)}) \, dx. \]
  Hence, since $X^{(A)}_s$ is $\F_s$-measurable,
  \[ \E [ X^{(A)}_{s+r} \mid \F_s] = X^{(A)}_s + \E [ \alpha W'_{s+r} - \alpha W'_s \mid \F_s ] + \E \left[ \int_s^{s+r} \delta(x,X^{(A)}) \, dx \,\Big|\, \F_s \right]. \]
  Since $W'$ has independent increments with mean $0$, the second term vanishes. By Proposition~\ref{P:FINITE_INT} we can apply Fubini's theorem to the third term and get
  \[ \E [ X^{(A)}_{s+r} \mid \F_s] = X^{(A)}_s + \int_s^{s+r} \E [ \delta(x,X^{(A)}) \mid \F_s ] \, dx. \]
  Finally (see e.g.~Corollary 2.14 in~\cite{Mat95}),
  \al{ \lim_{r \searrow 0} \frac{\E [ X^{(A)}_{s+r} \mid \F_s] - X^{(A)}_s}{r} = \lim_{r \searrow 0} \frac{1}{r} \int_s^{s+r} \E [ \delta(x,X^{A}) \mid \F_s ] \, dx = \E [ \delta(s,X^{(A)}) \mid \F_s ] = \delta(s,X^{(A)}) }
  for almost all $s \geq 0$.
\end{proof}


\section{The Markov property and the expected future}\label{S:MARKOV}

In this section we assume that the Gaussian process $X = (X_s)_{s \in [0,T]}$ is a Markov process with $R_X(s,t) \neq 0$ for all $0 < s,t < T$. Let $X^{(A)} = (X^{(A)}_s)_{s \in [0,T]}$ be the conditioned process of $X$ with respect to $A = \{ a_1, \ldots, a_N \}$ and let $\F^{X^{(A)}} = (\F^{X^{(A)}}_s)_{s \in [0,T]}$ be the natural filtration of $X^{(A)}$. The process $X^{(A)}$ is in general not a Markov process as well.

\subsection{Retrieving the Markov property}

Define Gaussian processes $I^{{(A)}, i}$ by
\[ I^{{(A)}, i}_s = \int_0^s X^{(A)}_x \, a_i(dx), \quad 0 \leq s \leq T, \quad 1 \leq i \leq N. \]

\begin{theorem}\label{T:MP}
  The Gaussian process $(X^{(A)}, I^{{(A)}, 1}, \ldots, I^{{(A)}, N})$ is an $(N+1)$-dimensional (in general time-inhomogeneous) Markov process.
\end{theorem}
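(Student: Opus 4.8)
The plan is to reduce the Markov property of the conditioned vector process to that of an auxiliary \emph{unconditioned} vector process, and then to show that the conditioning we impose preserves the Markov property. Throughout I would use the characterization that a centered, vector-valued Gaussian process is Markov if and only if, for every $s$, its past and future Gaussian spaces are conditionally independent given the present state. For a jointly Gaussian family this conditional independence is equivalent to the statement that the orthogonal projection of each future coordinate onto the past Gaussian space already lies in the linear span of the present coordinates.

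First I would introduce the unconditioned integral processes $I^i_s = \int_0^s X_x\,a_i(dx)$ and the augmented process $U_s = (X_s, I^1_s, \ldots, I^N_s)$, and show that $U$ is a Gaussian Markov process. The key observations are that $I^i_r$ lies in the closed linear span $\mathcal{G}^X_r$ of $\{X_x : x \le r\}$, so that the past Gaussian space of $U$ at time $s$ coincides with $\mathcal{G}^X_s$ (the past space of $X$ alone), and that $I^i_t - I^i_s = \int_{(s,t]} X_x\,a_i(dx)$ belongs to the future space of $X$. Hence, modulo the present coordinates $I^i_s$, every future element of $U$ lies in the future space of $X$, and the scalar Markov property of $X$ --- equivalently $\E[X_t \mid \mathcal{G}^X_s] \in \setspan\{X_s\}$ for $t \ge s$ --- forces the projection of any future element of $U$ onto $\mathcal{G}^X_s$ into $\setspan\{X_s\} \subseteq \setspan\{X_s, I^1_s, \ldots, I^N_s\}$. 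This is exactly the conditional independence of the past and future of $U$ given its present state.

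Next I would identify $V = (X^{(A)}, I^{(A),1}, \ldots, I^{(A),N})$ with $U$ conditioned on the terminal constraints. Since $a_i(X) = \int_0^T X_x\,a_i(dx) = I^i_T$, the event defining $\Prob^{(A)}$ is $\{I^i_T = 0 : 1 \le i \le N\}$, and because $X^{(A)}$ has the law of $X$ under $\Prob^{(A)}$, the process $V$ has the law of $U$ under $\Prob^{(A)}$. Writing $D = (I^1_T, \ldots, I^N_T)$, I note that the components of $D$ belong to the future Gaussian space of $U$ at every time $s \le T$. Combining the conditional independence of past and future given the present from the previous step with the fact that adjoining a future-measurable variable to the conditioning preserves conditional independence (the weak-union property, valid here since all variables are jointly Gaussian), I obtain that the past and future of $U$ remain conditionally independent given the present state \emph{and} $D$. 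Specializing to the value $D = 0$ yields precisely that, under $\Prob^{(A)}$, the past and future of $V$ are conditionally independent given $V_s$, i.e.\ that $V$ is Markov. The loss of time-homogeneity is expected, since the conditioning coefficients depend on both $s$ and the terminal time $T$.

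The step I expect to be most delicate is the last one: justifying the passage through the degenerate, measure-zero conditioning on $\{D = 0\}$ and the specialization of the conditional-independence relation to this value. The safe route is to keep everything at the level of Gaussian Hilbert-space projections, where conditioning on $\{D = 0\}$ is the orthogonal projection set up in Section~\ref{S:SERIES} and conditional independence is equivalent to orthogonality of the relevant residual subspaces; this reduces the claim to a covariance identity that can, if desired, be verified directly from Proposition~\ref{P:COND_COVARIANCE} and the anticipative representation in Theorem~\ref{T:ANTI}, thereby bypassing the measure-theoretic subtlety altogether.
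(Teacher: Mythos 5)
Your argument is correct in substance but takes a genuinely different route from the paper. You first prove that the \emph{unconditioned} augmented process $U=(X,I^1,\ldots,I^N)$ is Markov via the Gaussian projection criterion, and then push the Markov property through the conditioning by noting that $D=(I^1_T,\ldots,I^N_T)$ lies in the future Gaussian space at every time $s$, applying weak union, and specializing to $D=0$. The paper never introduces $U$: it works directly with the conditioned process, first for $X$ a Brownian motion, where it exhibits an explicit candidate residual $Z^*_{s,t}=X^{(A)}_t-X^{(A)}_s-\sum_{i} b_i(s,t)J^{(A),i}_s$ (with $J^{(A),i}_s=\psi_i(s,T)X^{(A)}_s+I^{(A),i}_s$) and checks by direct covariance computations, using the independent increments of $W$, that it is orthogonal to the entire past $\{X^{(A)}_u : u\le s\}$; the general Gaussian Markov case is then reduced to the Brownian one via the factorization $R_X(s,t)=f(s\wedge t)\,g(s\vee t)$, i.e.\ $X_s=g(s)W_{h(s)}$, in two steps (multiplication by a function, then a time change). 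Your route is more conceptual, needs only $\E[X_t\mid\F^X_s]\in\setspan\{X_s\}$, and avoids both the two-step reduction and the paper's standing support assumption on $R_X$ required for the representation $X_s=g(s)W_{h(s)}$. Its one delicate point is exactly the one you flag: an almost-sure conditional-independence identity does not automatically specialize to the null fibre $\{D=0\}$, so this step genuinely requires an argument; in the Gaussian setting it goes through because the regular conditional distributions are Gaussian with mean affine in the conditioning value and covariance independent of it, so the identity holds for every value of $D$, in particular $0$. Your proposed Hilbert-space fallback --- phrasing everything as orthogonality of residuals after projecting out $\setspan\{I^1_T,\ldots,I^N_T\}$ --- is essentially the covariance verification the paper performs explicitly, so the two proofs converge there; what the paper's longer computation buys is the explicit conditional expectation that is reused in Theorem~\ref{T:EXP_FUTURE}.
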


First, we show the result for the case that $X$ is Brownian motion and then in the general case.

\begin{proof}[Proof of Theorem~\ref{T:MP} for $X$ Brownian motion]
  We assume that $a_1(X), \ldots, a_n(X)$ are independent standard normal random variables. Without loss of generality we can do so by Proposition~\ref{P:ON_COND}. For every $0 \leq s \leq t \leq T$ we define the Gaussian random variable $Z_{s,t}$ by
  \[ Z_{s,t} = X^{(A)}_t- \E[X^{(A)}_t | \F^{X^{(A)}}_s]. \]
  Then $Z_{s,t}$ is independent from $\F^{X^{(A)}}_s$. We show that
  \equ[E:MP_9]{ Z_{s,t} = X^{(A)}_t - \E[X^{(A)}_t | \{ X^{(A)}_s, I^{{(A)}, 1}_s, \ldots, I^{{(A)}, N}_s \} ], }
  which implies that $\E[X^{(A)}_t | \F^{X^{(A)}}_s ] = \E[X^{(A)}_t | \{ X^{(A)}_s, I^{{(A)}, 1}_s, \ldots, I^{{(A)}, N}_s \} ]$. Since the natural filtration of $X^{(A)}$ and $(X^{(A)}, I^{{(A)}, 1}, \ldots, I^{{(A)}, N})$ coincide, this will prove the theorem.

  Set $\psi_i(y,s) = a_i(\Ind_{[y,s]})$ and rewrite the Gaussian processes $I^i$ in~\eqref{E:INT_PROC} as
  \begin{align}
    I^i_s = \int_0^s X_x \, a_i(dx) = \int_0^s \int_0^x \, dX_y \, a_i(dx) = \int_0^s \int_y^s \, a_i(dx) \, dX_y  = \int_0^s \psi_i(y,s) \, dX_y, \label{E:MP_5}
  \end{align}
  $0 \leq s \leq T$, $1 \leq i \leq N$. We condition the process $X$ on $a_i(X) = I^i_T = 0$ almost surely for $1 \leq i \leq N$. Since we assume $I^1_T, \ldots, I^N_T$ to be independent random variables with $\E \left[ I^1_T \right]^2 = 1$, the conditioned process $X^{(A)}$ and the processes $I^{{(A)}, i}$ are (as in Proposition~\ref{P:ANTI}) given by
  \equ[E:MP_8]{ X^{(A)}_s = X_s - \sum_{j=1}^N I^j_T \E \left[X_s I^j_T \right]\quad \text{and} \quad I^{{(A)}, i}_s = I^i_s - \sum_{j=1}^N I^j_T \E \left[I^i_s I^j_T\right], }
  $0 \leq s \leq T$, $1 \leq i \leq N$. Now, define Gaussian processes $J^i$ and $J^{{(A)}, i}$ by
  \begin{align}
    J^i_s = \psi_i(s,T) X_s + I^i_s = \int_0^s ( \psi_i(s,T) + \psi_i(y,s) ) \, dX_y 
          = \int_0^s \psi_i(y,T) \, dX_y, \label{E:MP_6}
  \end{align}
  and
  \begin{align}
    J^{{(A)}, i}_s &= \psi_i(s,T) X^{(A)}_s + I^{{(A)}, i}_s \label{E:MP_7} \\
                   &= \psi_i(s,T) \left( X_s - \sum_{j=1}^N I^j_T \E \left[X_s I^j_T \right]\right) + I^i_s - \sum_{j=1}^N I^j_T \E \left[I^i_s I^j_T \right]\notag \\
                   &= \psi_i(s,T) X_s + I^i_s - \sum_{j=1}^N I^j_T \E \left[(\psi_i(s,T) X_s + I^i_s) I^j_T\right] \notag \\
                   &= J^i_s - \sum_{j=1}^N I^j_T \E \left[J^i_s I^j_T\right], \quad 0 \leq s \leq T, 1 \leq i \leq N. \label{E:MP_7b} 
  \end{align}

  By~\eqref{E:MP_7}, it is enough to show
  \[ Z_{s,t} = X^{(A)}_t - \E[X^{(A)}_t | \{ X^{(A)}_s, J^{{(A)}, 1}_s, \ldots, J^{{(A)}, N}_s \} ] \]
  in order to show~\eqref{E:MP_9}. Define
  \equ[E:MP_Z]{ Z^*_{s,t} = X^{(A)}_t - X^{(A)}_s - \sum_{i=1}^N b_i(s,t) J^{{(A)}, i}_s, }
  where the $b_i$'s are chosen such that $Z^*_{s,t}$ is independent from $J^{{(A)}, i}_s$, $1 \leq i \leq N$, i.e., we require
  \begin{align}
    0 = \E \left[Z^*_{s,t} J^{{(A)}, i}_s \right]&= \E \left[Z^*_{s,t} \left( J^i_s - \sum_{j=1}^N I^j_T \E \left[ J^i_s I^j_T \right] \right) \right]\label{E:MP_10} \\
         &= \E \left[Z^*_{s,t} J^i_s\right] - \sum_{j=1}^N \E \left[Z^*_{s,t} I^j_T\right] \E \left[J^i_s I^j_T \right]. \notag
  \end{align}
  By~\eqref{E:MP_8} and~\eqref{E:MP_7b},
  \begin{align}
    Z^*_{s,t} &= X_t - \sum_{i=1}^N I^i_T \E \left[X_t I^i_T \right] - X_s + \sum_{i=1}^N I^i_T \E \left[X_s I^i_T \right] \label{E:MP_10b} \\
              &\qquad \qquad - \sum_{i=1}^N b_i(s,t) \left( J^i_s - \sum_{j=1}^N I^j_T \E \left[J^i_s I^j_T \right]\right) \notag \\
              &= X_t - X_s - \sum_{i=1}^N b_i(s,t) J^i_s - \sum_{i=1}^N I^i_T \E \left[ \left( X_t - X_s - \sum_{j=1}^N b_j(s,t) J^j_s \right) I^i_T \right], \notag
  \end{align}
  and thus
  \begin{align}
    \E \left[Z^*_{s,t} I^j_T\right] &= \E \left[\left( X_t - X_s - \sum_{i=1}^N b_i(s,t) J^i_s \right) I^j_T\right] \label{E:MP_11} \\
       &\qquad - \sum_{i=1}^N \E \left[I^i_T I^j_T\right] \E \left[ \left( X_t - X_s - \sum_{k=1}^N b_k(s,t) J^k_s \right) I^i_T\right] = 0, \notag
  \end{align}
  since we assumed $\E I^i_T I^j_T = \delta_{i,j}$. Moreover,
  \al{ \E \left[Z^*_{s,t} J^i_s\right] &= \E \left[(X_t - X_s) J^i_s\right] - \sum_{j=1}^N b_j(s,t) \E \left[J^j_s J^i_s\right] \\
         &\qquad \qquad - \sum_{j=1}^N \E \left[\left( X_t - X_s - \sum_{k=1}^N b_k(s,t) J^k_s \right) I^j_T \right] \E \left[ I^j_T J^i_s\right], }
  where $\E \left[(X_t - X_s) J^i_s\right] = 0$ and $\E \left[I^j_T J^i_s\right] = \E \left[J^j_s J^i_s\right]$ by~\eqref{E:MP_5} and~\eqref{E:MP_6}. Hence, \eqref{E:MP_10} reduces to
  \equ[E:MP_12]{ 0 = - \sum_{j=1}^N b_j(s,t) - \sum_{j=1}^N \E \left[ \left( X_t - X_s - \sum_{k=1}^N b_k(s,t) J^k_s \right) I^j_T\right]. }
  
  By~\eqref{E:MP_8} and~\eqref{E:MP_11}, for all $0 \leq u \leq s$,
  \al{ \E \left[Z^*_{s,t} X^{(A)}_u\right] &= \E \left[Z^*_{s,t} X_u\right] - \sum_{j=1}^N \E \left[X_u I^j_T \right] \E \left[ Z^*_{s,t} I^j_T \right] = \E \left[Z^*_{s,t} X_u\right]. }
  We replace $Z^*_{s,t}$ by~\eqref{E:MP_10b} and obtain
  \al{ \E \left[Z^*_{s,t} X^{(A)}_u\right] &= \E \left[ (X_t - X_s) X_u \right] - \sum_{i=1}^N b_i(s,t) \E \left[ J^i_s X_u \right] \\
         &\qquad \qquad - \sum_{i=1}^N \E \left[ \left( X_t - X_s - \sum_{j=1}^N b_j(s,t) J^j_s \right) I^i_T \right] \E \left[ I^i_T X_u \right]. }
  Since, $\E \left[ (X_t - X_s) X_u \right] = 0$ and $\E \left[ I^i_T X_u \right] = \E \left[ J^i_s X_u \right]$ by~\eqref{E:MP_5} and~\eqref{E:MP_6} it follows 
  \al{ \E \left[Z^*_{s,t} X^{(A)}_u\right] 
       = \E \left[ J^i_s X_u \right] \left( - \sum_{i=1}^N b_i(s,t) - \sum_{i=1}^N \E \left[ \left( X_t - X_s - \sum_{j=1}^N b_j(s,t) J^j_s \right) I^i_T \right] \right) }
  and thus $\E \left[ Z^*_{s,t} X^{(A)}_u \right] = 0$ by~\eqref{E:MP_12}. This implies $Z^*_{s,t} = Z_{s,t}$. Hence, the theorem is proven for the case that $X$ is standard linear Brownian motion.
\end{proof}

We now turn to the general case. In~\cite{Bor82} it was shown that, for Gaussian Markov processes $X=(X_s)_{s \in [0,T]}$ with $R_X(s,t) \neq 0$ for all $0 < s,t < T$, there are (up to a constant) uniquely defined functions $f: [0,T] \rightarrow \R$ and $g: [0,T] \rightarrow \R$ such that $h= f/g$ (with the convention $0/0=0$) is a non-negative, non-decreasing function on $[0,T]$ and
\[ R_X(s,t) = f(s \wedge t) g(s \vee t), \quad 0 \leq s,t \leq T. \]
This implies
\[ X_s = g(s) W_{h(s)} \]
in finite-dimensional distributions, where $W = (W_s)_{s \geq 0}$ is a standard linear Brownian motion:
\al{ \E g(s) W_{h(s)} g(t) W_{h(t)} &= g(s) g(t) ( h(s) \wedge h(t) ) = g(s) g(t)  h(s \wedge t) \\
       &= g(s) g(t)  \frac{f(s \wedge t)}{g(s \wedge t)} = f(s \wedge t) g(s \vee t). }

\begin{proof}[Proof of Theorem~\ref{T:MP} in the general case]
  We proceed in two steps: (i) we show Theorem~\ref{T:MP} for $(\tilde{g}(s) W_s)_{s \in [0,T]}$ for every positive function $\tilde{g}$; (ii) we prove the theorem for the process $(\tilde{X}_{h(s)})_{s \in [0,T]}$, where we assume the correctness of the theorem for the process $\tilde{X}$.

  Then, let $h^{-1}$ be the inverse function of $h$ (which exists since $h$ is a non-decreasing function), i.e., we have $h^{-1}(h(s)) = s$ for all $0 \leq s \leq T$, and define $\tilde{g} = g \circ h^{-1}$. By~(i), Theorem~\ref{T:MP} holds true for $\tilde{X} = \tilde{g} W$ and thus, by~(ii), Theorem~\ref{T:MP} holds true for $X = \tilde{X} \circ h$, i.e.,
  \[ X_s = \tilde{X}_{h(s)} = \tilde{g}(h(s)) W_{h(s)} = (g \circ h^{-1} \circ h)(s) W_{h(s)} = g(s) W_{h(s)}. \]

  We prove~(i): the Brownian motion $W$ and the process $\tilde{X} = \tilde{g} W$ on $[0,T]$ are generated by $u:L_2([0,T]) \rightarrow C([0,T])$ and $u_{\tilde{g}}:L_2([0,T]) \rightarrow C([0,T])$ with
  \[ (ue)(s) = \int_0^s e(x) \, dx, \qquad (u_{\tilde{g}} e)(s) = \tilde{g}(s) \int_0^s e(x) \, dx, \]
  $e \in L_2([0,T])$, $0 \leq s \leq T$. Define measures $a^{\tilde{g}}_i$ by $a^{\tilde{g}}_i(B) = \int_B \tilde{g}(x) \, a_i(dx)$, $B \in \B([0,T])$, $1 \leq i \leq N$. Then, $a_i(\tilde{X}) = a^{\tilde{g}}_i(W)$ and $(u^*a^{\tilde{g}}_i)(x) = (u_{\tilde{g}}^*a_i)(x) = a^{\tilde{g}}_i(\Ind_{[x,T]})$. Hence,
  \[ (u_{\tilde{g}} u_{\tilde{g}}^* a_i)(s) = \tilde{g}(s) \int_0^s a^{\tilde{g}}_i(\Ind_{[x,T]}) \, dx = \tilde{g}(s) (u u^* a^{\tilde{g}}_i)(s). \]
  By Proposition~\ref{P:ON_COND} we may assume that the random variables $a_1(\tilde{X}), \ldots, a_N(\tilde{X})$ are independent standard normal and thus, for $W^{(A^{\tilde{g}})}$ being the conditioned process of $W$ by $A^{\tilde{g}} = \{ a^{\tilde{g}}_1, \ldots, a^{\tilde{g}}_N \}$ and $\tilde{X}^{(A)}$ being the conditioned process of $\tilde{X}$ by $A$, $0 \leq s, t \leq T$,
  \al{ \E \left[ \tilde{X}^{(A)}_s \tilde{X}^{(A)}_t \right] &= \tilde{g}(s) \tilde{g}(t) (s \wedge t) - \sum_{i=1}^N (u_{\tilde{g}} u_{\tilde{g}}^* a_i)(s) (u_{\tilde{g}} u_{\tilde{g}}^* a_i)(t) \\
         &= \tilde{g}(s) \tilde{g}(t) \left( s \wedge t - \sum_{i=1}^N (u u^* a^{\tilde{g}}_i)(s) (u u^* a^{\tilde{g}}_i)(t) \right) \\
         &= \E \left[ \tilde{g}(s) W^{(A^{\tilde{g}})}_s \tilde{g}(t) W^{(A^{\tilde{g}})}_t \right], }
  i.e., the processes $\tilde{X}^{(A)}$ and $\tilde{g} W^{(A^{\tilde{g}})}$ coincide in law. Consider the integrated processes $I^{(A), i}$ and $L^{(A^{\tilde{g}}), i}$ given by
  \[ I^{(A), i}_s = \int_0^s \tilde{X}^{(A)}_x \, a_i(dx), \quad L^{(A^{\tilde{g}}), i}_s = \int_0^s W^{(A^{\tilde{g}})}_x \, a^{\tilde{g}}_i(dx), \quad 1 \leq i \leq N. \]
  From the proof of Theorem~\ref{T:MP} for the case that $X$ is Brownian motion we know that $(W^{(A^{\tilde{g}})}, L^{(A^{\tilde{g}}), 1}, \ldots, L^{(A^{\tilde{g}}), N})$ is a Markov process. Since $\tilde{X}^{(A)}/\tilde{g}$ and $W^{(A^{\tilde{g}})}$ coincide in law this implies that $(\tilde{X}^{(A)}/\tilde{g}, I^{(A),1}, \ldots, I^{(A),N})$ is a Markov process, where we used
  \[ \int_0^s \tilde{X}^{(A)}_x / \tilde{g}(x) \, a^{\tilde{g}}_i(dx) = \int_0^s \tilde{X}^{(A)}_x \, a_i(dx) = I^{(A),i}_s, \quad 1 \leq i \leq N. \]
  Finally, this implies that $(\tilde{X}^{(a)}, I^{(A),1}, \ldots, I^{(A),N})$ is a Markov process, which proves~(i).

  We prove~(ii): Assume that Theorem~\ref{T:MP} holds true for $\tilde{X} = (\tilde{X}_s)_{s \in [0, T']}$ and let $\tilde{X}$ be generated by $u: H \rightarrow C([0,T'])$. Moreover, let $h$ be a non-negative, increasing function on $[0,T]$ with $h(T) = T'$. Define $X = (X_s)_{s \in [0,T]} = (\tilde{X}_{h(s)})_{s \in [0,T]}$. Then $X$ is generated by $u_h: H \rightarrow C([0,T])$ with $(u_h e)(s) = (ue)(h(s))$, $e \in H$. Define measures $a^h_i$ by $a^h_i(B) = (a_i \circ h^{-1})(B)$, $B \in \B([0,T])$, $1 \leq i \leq N$. Then,
  \equ[E:MP_13]{ a_i(X) = \int_0^T X_x \, a_i(dx) = \int_0^T \tilde{X}_{h(x)} \, a_i(dx) = \int_{h(0)}^{h(T)} \tilde{X}_x \, (a \circ h^{-1})(dx). }
  If $h(0) > 0$ then, since $h$ is increasing, $h^{-1}([0, h(0))) = \emptyset$, and thus
  \equ[E:MP_14]{ a_i(X) = \int_0^{T'} \tilde{X}_x \, (a \circ h^{-1})(dx) = a^h_i(\tilde{X}). }
  In the same way we get for all $e \in H$,
  \[ \la u_h^*a_i, e \ra = \int_0^T (ue)(h(x)) \, a_i(dx) = \int_0^{T'} (ue)(x) \, a^h_i(dx) = \la u^*a^h_i, e \ra \]
  and thus $u_h^*a_i = u^*a^h_i$, $1 \leq i \leq N$. By Proposition~\ref{P:ON_COND} we may assume that the random variables $a_1(X), \ldots, a_N(X)$ are independent standard normal and thus, for $X^{(A)}$ being the conditioned process of $X$ with respect to $A$ and $\tilde{X}^{(A^h)}$ being the conditioned process of $\tilde{X}$ with respect to $A^h = \{ a^h_1, \ldots, a^h_N \}$, $0 \leq s, t \leq T$,
  \al{ \E \left[ X^{(A)}_s X^{(A)}_t \right] &= \E \left[ X_s X_t \right] - \sum_{i=1}^N (u_h u_h^* a_i)(s) (u_h u_h^* a_i)(t) \\
         &= \E \left[ \tilde{X}_{h(s)} \tilde{X}_{h(t)} \right] - \sum_{i=1}^N (u u^* a^h_i)(h(s)) (u u^* a^h_i)(h(t))  \\
         &= \E \left[ \tilde{X}^{(A^h)}_{h(s)} \tilde{X}^{(A^h)}_{h(t)} \right], }
  i.e., the processes $X^{(A)}_\cdot$ and $\tilde{X}^{(A^h)}_{h(\cdot)}$ coincide in law. Consider the integrated processes $L^{(A^h), i}$ given by $L^{(A^h), i}_s = \int_0^s \tilde{X}^{(A^h)}_x \, a^h_i(dx)$, $1 \leq i \leq N$. Then, as in~\eqref{E:MP_13} and~\eqref{E:MP_14}, for $0 \leq s \leq T$,
  \[ I_s^{(A),i} = \int_0^s X_x^{(A)} \, a_i(dx) = \int_0^s \tilde{X}_{h(x)}^{(A^h)} \, a_i(dx) = \int_0^{h(s)} \tilde{X}^{(A^h)}_x \, a^h_i(dx) = L^{(A^h)}_{h(s)} \]
  in finite-dimensional distributions. By the assumption on $\tilde{X}$ the process \linebreak $(\tilde{X}^{(A^h)}_s, L^{(A^h), 1}_s, \ldots, L^{(A^h), N}_s)_{s \in [0,T']}$ is a Markov process implying that \linebreak $(\tilde{X}^{(A^h)}_{h(s)}, L^{(A^h), 1}_{h(s)}, \ldots, L^{(A^h), N}_{h(s)})_{s \in [0,T]}$ is a Markov process as well. Since \linebreak $(X^{(A)}_s, I^{(A),1}_s, \ldots, I^{(A),N}_s)_{s \in [0,T]}$ and $(\tilde{X}^{(A^h)}_{h(s)}, L^{(A^h), 1}_{h(s)}, \ldots, L^{(A^h), N}_{h(s)})_{s \in [0,T]}$ coincide in law we conclude that $(X^{(A)}_s, I^{(A),1}_s, \ldots, I^{(A),N}_s)_{s \in [0,T]}$ is a Markov process.
\end{proof}

\subsection{The expected future}

Now, we can give an explicit formula for $\E[X^{(A)}_t | \F^{X^{(A)}}_s]$, $s < t \leq T$. This together with Theorem~\ref{T:DRIFT} enables us to calculate the drift term in Theorem~\ref{T:SDE_EX} in the case that $X$ is Markovian. Define a matrix $D_s$ by
\[ D_s = \left( \begin{matrix} g(s) & (ue_1)(s) & \dots & (ue_N)(s) \\ \int_{s+}^T g(x) \, a_1(dx) & \int_{s+}^T (ue_1)(x) \, a_1(dx) & \dots & \int_{s+}^T (ue_N)(x) \, a_1(dx) \\ \vdots & \vdots & \ddots & \vdots \\ \int_{s+}^T g(x) \, a_N(dx) & \int_{s+}^T (ue_1)(x) \, a_N(dx) & \dots & \int_{s+}^T (ue_N)(x) \, a_N(dx) \end{matrix} \right) \]
and a vector $d_s$ by
\[ d_s = \left( X^{(A)}_s, - I^{(A), 1}_s, \ldots, - I^{(A), N}_s \right)^\tau. \]

\begin{theorem}\label{T:EXP_FUTURE}
  For every $s<t$ there are $\F^{X^{(A)}}_s$-measurable random variables $\xi_0, \ldots, \xi_N$ such that
  \[ \E[X^{(A)}_t | \F^{X^{(A)}}_s] = \xi_0 g(t) + \sum_{i=1}^N \xi_i (ue_i)(t). \]
  Assume that the matrix $D_s$ is invertible. Then $\xi = (\xi_0, \ldots, \xi_N)^\tau$ is given by $\xi = D_s^{-1} d_s$.
\end{theorem}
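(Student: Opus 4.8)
The plan is to first establish the \emph{form} of the predictor---that, as a function of the future time $t$, it is confined to the linear span of $g$ and $ue_1,\ldots,ue_N$---using the Markov property of Theorem~\ref{T:MP} together with the covariance formula of Proposition~\ref{P:COND_COVARIANCE}, and then to pin down the $N+1$ coefficients by setting up and solving the linear system $D_s\xi = d_s$.

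For the existence of the representation, recall from Theorem~\ref{T:MP} that $(X^{(A)}_s, I^{(A),1}_s,\ldots, I^{(A),N}_s)$ is Markov and generates the same filtration as $X^{(A)}$, so that
\[ \E[X^{(A)}_t \mid \F^{X^{(A)}}_s] = \E[X^{(A)}_t \mid X^{(A)}_s, I^{(A),1}_s, \ldots, I^{(A),N}_s]. \]
As the process is jointly Gaussian, the right-hand side is a deterministic linear combination of $X^{(A)}_s$ and the $I^{(A),i}_s$, whose coefficient functions of $t$ are produced by the Gaussian regression formula from the covariances $\E[X^{(A)}_t X^{(A)}_s]$ and $\E[X^{(A)}_t I^{(A),i}_s]$ and from the (constant in $t$) covariance matrix of the conditioning vector. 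The crucial point is that each of these covariances, for $t > s$, lies in $\setspan\{g, ue_1, \ldots, ue_N\}$ as a function of $t$: by Proposition~\ref{P:COND_COVARIANCE} and the Markov factorization $R_X(s,t) = f(s\wedge t)\,g(s\vee t)$, we have for $t > s$
\[ \E[X^{(A)}_x X^{(A)}_t] = f(x)\,g(t) - \sum_{i=1}^N (ue_i)(x)\,(ue_i)(t), \qquad x \le s < t, \]
and integrating this identity against $a_i(dx)$ over $x \in [0,s]$ (where always $x < t$) gives the analogous expression for $\E[X^{(A)}_t I^{(A),i}_s]$. Since the regression mixes these functions only through a constant matrix, every coefficient function of $t$ again lies in $\setspan\{g, ue_1, \ldots, ue_N\}$, and collecting the $g(t)$- and $(ue_i)(t)$-terms yields $\F^{X^{(A)}}_s$-measurable $\xi_0, \ldots, \xi_N$ (each a fixed linear combination of $X^{(A)}_s$ and the $I^{(A),i}_s$, independent of $t$) with $\E[X^{(A)}_t \mid \F^{X^{(A)}}_s] = \xi_0\, g(t) + \sum_{i=1}^N \xi_i\, (ue_i)(t)$ for all $t > s$.

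To determine the coefficients, I would extract $N+1$ linear relations. Letting $t \searrow s$ and using the continuity of $X^{(A)}$, of $g$, and of the $ue_i$, together with $\E[X^{(A)}_s \mid \F^{X^{(A)}}_s] = X^{(A)}_s$, gives $\xi_0 g(s) + \sum_i \xi_i (ue_i)(s) = X^{(A)}_s$, the first row of $D_s \xi = d_s$. For the remaining rows I would use that the conditioned process fulfills $a_i(X^{(A)}) = 0$: splitting the integral at $s$,
\[ 0 = a_i(X^{(A)}) = I^{(A),i}_s + \int_{s+}^T X^{(A)}_x \, a_i(dx), \]
and taking $\E[\,\cdot \mid \F^{X^{(A)}}_s]$, where $I^{(A),i}_s$ is $\F^{X^{(A)}}_s$-measurable and Fubini applies since $a_i$ is a finite measure and $X^{(A)}$ is a continuous Gaussian process, I substitute the representation of $\E[X^{(A)}_x \mid \F^{X^{(A)}}_s]$ for $x > s$ to obtain
\[ -I^{(A),i}_s = \xi_0 \int_{s+}^T g(x)\, a_i(dx) + \sum_{j=1}^N \xi_j \int_{s+}^T (ue_j)(x)\, a_i(dx), \qquad 1 \le i \le N. \]
These are exactly rows $1$ through $N$ of $D_s \xi = d_s$, so whenever $D_s$ is invertible the coefficients are uniquely given by $\xi = D_s^{-1} d_s$.

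The main obstacle is the existence step, and within it the verification that the covariances are, as functions of the future time $t$, trapped in the finite-dimensional span of $g$ and the $ue_i$. This is precisely where Markovianity is indispensable: the factorization $R_X(s,t) = f(s)\,g(t)$ for $t > s$ forces the entire $t$-dependence of the unconditioned predictor through $g(t)$, while Proposition~\ref{P:COND_COVARIANCE} adds only the finitely many correction functions $(ue_i)(t)$. Once this spanning property is secured, deriving and solving the linear system is routine; the only additional care concerns the continuity argument for the limit $t \searrow s$ and the Fubini interchange, both of which are standard in this Gaussian setting.
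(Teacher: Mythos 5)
Your proposal is correct and follows essentially the same route as the paper: establish the spanning property of the predictor in $t$ via Theorem~\ref{T:MP}, the Gaussian regression formula and the factorization $R_X(x,t)=f(x)g(t)$ for $x\le s<t$, then obtain the $N+1$ equations from the limit $t\searrow s$ and from $a_j(X^{(A)})=0$. The only cosmetic difference is that the paper packages the second step by introducing the glued process $Z_t$ ($=X^{(A)}_t$ for $t\le s$ and $=\E[X^{(A)}_t\mid\F^{X^{(A)}}_s]$ for $t>s$) and asserting it fulfills the conditions, whereas you condition the identity $0=a_j(X^{(A)})$ directly and justify the interchange by Fubini -- the same computation.
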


\begin{proof}
  For $s < t$ and $1 \leq i \leq N$, we have
  \al{ \E \left[ I^{(A), i}_s X^{(A)}_t \right] &= \E \left[ \int_0^s X^{(A)}_x \, a_i(dx) X^{(A)}_t \right] 
         = \int_0^s \E \left[ X^{(A)}_x X^{(A)}_t \right] \, a_i(dx) \\
         &= \int_0^s R_{X^{(A)}}(x,t) \, a_i(dx) \\
         &= \int_0^s \left( f(x)g(t) - \sum_{i=1}^N (ue_i)(x)(ue_i)(t) \right) \, a_i(dx) \\
         &= g(t) \int_0^s f(x) \, a_i(dx) - \sum_{i=1}^N (ue_i)(t) \int_0^s (ue_i)(x) \, a_i(dx). }
  In particular, $\E \left[ I^{(A), i}_s X^{(A)}_t \right]$ is a deterministic linear combination of $g(t)$ and $(ue_i)(t)$, $1 \leq i \leq N$.

  By Theorem~\ref{T:MP},
  \[ \E[X^{(A)}_t | \F^{X^{(A)}}_s] = \E[ X^{(A)}_t \mid \{ X^{(A)}_s, I^{(A), 1}_s, \ldots, I^{(A), N}_s \} ]. \]
  Assume without loss of generality that $\{ X^{(A)}_s, I^{(A), 1}_s, \ldots, I^{(A), N}_s \}$ are orthonormal random variables (otherwise orthonormalize them similar to~\eqref{E:ANTI_1}). Then, by the general theory of conditioning of Gaussian random variables,
  \[ \E[X^{(A)}_t | \F^{X^{(A)}}_s] = X^{(A)}_s \E \left[ X^{(A)}_s X^{(A)}_t \right] + \sum_{i=1}^N I^{(A), i}_s \E \left[ X^{(A)}_t I^{(A), i}_s \right]. \]
  Since $\E \left[ X^{(A)}_s X^{(A)}_t \right]$ and $\E \left[ I^{(A), i}_s X^{(A)}_t \right]$ are deterministic linear combinations of $g(t)$ and $(ue_i)(t)$, $1 \leq i \leq N$, there are $\F^{X^{(A)}}_s$-measurable random variables $\xi_0, \ldots, \xi_N$ such that
  \[ \E[X^{(A)}_t | \F^{X^{(A)}}_s] = \xi_0 g(t) + \sum_{i=1}^N \xi_i (ue_i)(t). \]

  In order to determine $\xi_0, \ldots, \xi_N$, consider the process $Z = (Z_t)_{t \in [0,T]}$ defined by
  \[ Z_t = \begin{cases} X^{(A)}_t, &\text{for $t \leq s$,} \\ \E[X^{(A)}_t | \F^{X^{(A)}}_s], &\text{for $t > s$.} \end{cases} \]
  $Z$ is continuous and fulfills the conditions $a_1, \ldots, a_N$, i.e.,
  \[ Z_s = X^{(A)}_s = \lim_{t \searrow s} Z_t = \lim_{t \searrow s} \E[X^{(A)}_t | \F^{X^{(A)}}_s] = \xi_0 g(s) + \sum_{i=1}^N \xi_i (ue_i)(s), \]
  and
  \al{ 0 = \int_0^T Z_x \, a_j(dx) &= \int_0^s X^{(A)}_x \, a_j(dx) + \int_{s+}^T \E[X^{(A)}_x | \F^{X^{(A)}}_s] \, a_j(dx) \\
             &= I^{(A), j}_s + \xi_0 \int_{s+}^T g(x) \, a_j(dx) + \sum_{i=1}^N \xi_i \int_{s+}^T (ue_i)(x) \, a_j(dx), }
  i.e.,
  \[ - I^{(A), j}_s = \xi_0 \int_{s+}^T g(x) \, a_j(dx) + \sum_{i=1}^N \xi_i \int_{s+}^T (ue_i)(x) \, a_j(dx), \quad 1 \leq j \leq N. \]
  This leads to the system of linear equations $D_s \xi = d_s$ and its solution $\xi = D_s^{-1} d_s$.
\end{proof}


\section{Examples}\label{S:EXAMPLES}

\subsection{The zero area Brownian bridge}\label{SS:ZABB}

The standard linear Brownian motion $W=(W_s)_{s \in [0,1]}$ on $[0,1]$ is generated by the operator $u: L_2([0,1]) \rightarrow C([0,1])$ with
\[ (uh)(s) = \int_0^s h(x) \, dx \]
for $h \in L_2([0,1])$. For example, the trigonometric basis in $L_2([0,1])$,
\[ \{ e_n : n \geq 0 \} = \{ 1 \} \cup \{ \sqrt{2} \cos(\pi n x) : n \geq 1 \}, \]
for which $(u e_0)(s) = s$ and
\[ (u e_n)(s) = \int_0^s \sqrt{2} \cos(\pi n x) \, dx = \sqrt{2} \frac{\sin(\pi n s)}{\pi n}, \]
yields the well known representation
\[ W_s = \omega_0 s + \sqrt{2} \sum_{n=1}^\infty \omega_n \frac{\sin(\pi n s)}{\pi n}. \]

Let $M=(M_s)_{s \in [0,1]}$ be the Brownian motion conditioned to be zero at time $1$ and with integral zero, i.e., $M = W^{(A)}$ for $A = \{ \delta_1, a_0 \} \subset C([0,1])^*$ with
\[ \delta_1(f) = f(1) \qquad \text{and} \qquad a_0(f) = \int_0^1 f(s) \, ds, \qquad f \in C([0,1]). \]
It holds
\[ (u^*\delta_1)(x) = 1 \qquad \text{and} \qquad (u^*a_0)(x) = 1-x. \]
The detached subspace $H_{(A)}$ of $L_2([0,1])$ with respect to the set of conditions $A = \{ \delta_1, a_0 \} \subset C([0,1])^*$ is thus $H_{(A)} = \setspan \{ 1, 1-x \}$. An orthonormal basis in $H_{(A)}$ is $\{e_1, e_2\} = \{ 1, \sqrt{3}(1-2x) \}$. Hence, according to Proposition~\ref{P:COND_COVARIANCE}, the covariance of the zero area Brownian bridge $M=W^{(A)}$ is given by ($0 \leq s,t \leq 1$)
\al{ R_M(s,t) &= R_W(s, t) - (ue_1)(s)(ue_1)(t) - (ue_2)(s)(ue_2)(s) \\
              &= \min\{s,t\} - \int_0^s \, dx \int_0^t \, dy - \int_0^s \sqrt{3}(1-2x) \, dx \int_0^t \sqrt{3}(1-2y) \, dy \\
              &= \min\{s,t\} - st - 3(s- s^2)(t-t^2). }

Using the notation from Theorem~\ref{T:ANTI} the matrix $B$ and the vector $b$ become
\[ B = \left( \begin{matrix} \delta_1(ue_1) & \delta_1(ue_2) \\ a_0(ue_1) & a_0(ue_2) \end{matrix} \right) = \left( \begin{matrix} 1 & 0 \\ 1/2 & 1/(2 \sqrt{3}) \end{matrix} \right) \quad \text{and} \quad b = \left( \begin{matrix} \delta_1(W) \\ a_0(W) \end{matrix} \right) = \left( \begin{matrix} W_1 \\ I_1 \end{matrix} \right), \]
where $I_s = \int_0^s W_x \, dx$. Solving the linear equation system $B \xi = b$ yields
\[ \xi_1 = W_1 \qquad \text{and} \qquad \xi_2 = \sqrt{3}(2 I_1 - W_1). \]
Then, by Theorem~\ref{T:ANTI}, an anticipative representation for $M$ is
\al{ M_s &= W_s - W_1 s - \sqrt{3}(2 I_1 - W_1) \sqrt{3}(s-s^2) \\
         &= W_s - s(3s - 2) W_1 - 6 s (1-s) I_1. }

Let $\Prob_W$ and $\Prob_M$ be the induced measures of $W$ and $M$ on $(C([0,1]), \mathcal{C})$. For every $s < 1$ the condition in~\eqref{E:EQU_MEAS_1} is fulfilled. Hence, by Theorem~\ref{T:EQU_MEASURES}, the measures $\Prob_W$ and $\Prob_M$ are equivalent on $\F_s$ for every $s < 1$, where, as in Theorem~\ref{T:EQU_MEASURES}, $\F_s \subset \mathcal{C}$ is the smallest $\sigma$-algebra on $C([0,T])$ such that all point evaluation functionals $\delta_x$, $0 \leq x \leq s$, are $\F_s$-$\B(\R)$-measurable.

By Theorem~\ref{T:SDE_EX}, $M$ is a solution of the stochastic differential equation
\[ dM_s = dW_s + \delta(s, M) ds, \quad M_0 = 0, \quad 0 \leq s < 1, \]
where $\delta$ is a progressively measurable functional on $C([0,1])$. By Theorem~\ref{T:DRIFT},
\[ \delta(s, M) = \lim_{r \searrow 0} \frac{\E[ M_{s+r} \mid \F_s^M] - M_s}{r}, \quad 0 \leq s < 1, \]
where $\F_s^M$ is the natural filtration of $M$ at time $s$. Define $J_s = \int_0^s M_x \, dx$, $0 \leq s \leq 1$. Since $(W_s)_{s \in [0,1]}$ is a Markov process, $(M_s, J_s)_{s \in [0,1]}$ is a Markov process as well by Theorem~\ref{T:MP}. By Theorem~\ref{T:EXP_FUTURE}, for $0 \leq s \leq t < 1$, we have
\[ \E[ M_t \mid \F_s^M] = \xi_0 + \xi_1 t + \xi_2 \sqrt{3}(t-t^2), \]
where $\xi = (\xi_0, \xi_1, \xi_2)$ is the solution of the system of linear equations $D_s \xi = d_s$ with $d_s = (M_s, 0, -J_s)$ and
\[ D_s = \left( \begin{matrix} 1 & s & \sqrt{3}(s-s^2) \\ 1 & 1 & 0 \\ 1-s & (1-s^2)/2 & \sqrt{3}(1-s^2)/2 - (1 - s^3)/\sqrt{3} \end{matrix} \right). \]
Solving this system of linear equations yields
\al{ \xi_0 &= \frac{M_s (2s^2 - s - 1) - 6 J_s s}{(s-1)^3}, \\
     \xi_1 &= - \frac{M_s (2s^2 - s - 1) - 6 J_s s}{(s-1)^3}, \\
     \xi_2 &= - \sqrt{3} \frac{M_s(s-1) - 2 J_s}{(s-1)^3}, }
and thus
\al{ \E[ M_t \mid \F_s^M] = \frac{M_s (2s^2 - s - 1) - 6 J_s s}{(s-1)^3} - t \ \frac{M_s (2s^2 - s - 1) - 6 J_s s}{(s-1)^3} - 3(t-t^2) \ \frac{M_s(s-1) - 2 J_s}{(s-1)^3}. }
We have
\[ \lim_{r \searrow 0} \frac{\E[ M_{s+r} \mid \F_s^M] - M_s}{r} = - \frac{4M_s}{1-s} - \frac{6 J_s}{(1-s)^2}. \]
Hence, $M$ has the stochastic differential
\[ dM_s = dW_s - \frac{4M_s}{1-s} ds - \frac{6 J_s}{(1-s)^2} ds, \quad M_0 = 0, \quad 0 \leq s < 1. \]

\subsection{Gaussian bridges}\label{SS:BRIDGES}

The conditioning of a Gaussian process on $[0,T]$ to be zero at time $T$ is a well-studied but important example (see for example~\cite{Gas04}). This leads to Gaussian bridges: let $X = (X_s)_{s \in [0,T]}$ be a continuous Gaussian process and let $\delta_T \in C([0,T])^*$ be the evaluation functional at point $T$. Then $X^{(\delta_T)}$ is called the bridge process of $X$.

\begin{prop}
  The covariance $R_{X^{(\delta_T)}}(s,t) = \E X^{(\delta_T)}_s X^{(\delta_T)}_t$ is
  \[ R_{X^{(\delta_T)}}(s,t) = R_X(s,t) - \frac{R_X(s,T)R_X(t,T)}{R_X(T,T)}, \qquad 0 \leq s,t \leq T, \]
  where $R_X(s,t) = \E X_s X_t$ is the covariance function of $X$, and a anticipative representation for $X^{(\delta_T)}$ is
  \[ X^{(\delta_T)}_s = X_s - \frac{R_X(s,T)}{R_X(T,T)} X_T, \quad 0 \leq s \leq T. \]
\end{prop}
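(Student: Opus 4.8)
The plan is to recognize this proposition as the special case $N=1$, $A = \{\delta_T\}$, of the general results established earlier, namely Proposition~\ref{P:COND_COVARIANCE} for the covariance and Theorem~\ref{T:ANTI} (equivalently Proposition~\ref{P:ANTI}) for the anticipative representation. All that is needed is to make the single orthonormal basis vector of the detached subspace $H_{(A)}$ explicit and to translate the inner products in $H$ into values of $R_X$.

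First I would identify the detached subspace. Since $A = \{\delta_T\}$, equation~\eqref{E:CONDITIONS} gives $H_{(A)} = \setspan\{u^*\delta_T\}$, which is (at most) one-dimensional. The crucial computational device is the identity $R_X(s,t) = \la u^*\delta_s, u^*\delta_t\ra$, which follows from~\eqref{E:CP_4} together with the definition of the adjoint $u^*$. Taking $s=t=T$ gives $\|u^*\delta_T\|^2 = R_X(T,T)$, and, using $\delta_s(uh) = \la u^*\delta_s, h\ra$ with $h = u^*\delta_T$, gives $(u u^*\delta_T)(s) = \la u^*\delta_s, u^*\delta_T\ra = R_X(s,T)$. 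Assuming $R_X(T,T) \neq 0$ (otherwise $X_T = 0$ almost surely and the conditioning is vacuous), an orthonormal basis of $H_{(A)}$ is the single vector $e_1 = u^*\delta_T / \sqrt{R_X(T,T)}$, so that
\[ (ue_1)(s) = \frac{(u u^*\delta_T)(s)}{\sqrt{R_X(T,T)}} = \frac{R_X(s,T)}{\sqrt{R_X(T,T)}}. \]

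With $(ue_1)$ in hand, both formulas drop out. Substituting into Proposition~\ref{P:COND_COVARIANCE} yields
\[ R_{X^{(\delta_T)}}(s,t) = R_X(s,t) - (ue_1)(s)(ue_1)(t) = R_X(s,t) - \frac{R_X(s,T)R_X(t,T)}{R_X(T,T)}, \]
which is the claimed covariance. For the anticipative representation I would apply Theorem~\ref{T:ANTI} with $N=1$: here the $1\times 1$ matrix is $B = \delta_T(ue_1) = (ue_1)(T) = \sqrt{R_X(T,T)}$ and the vector is $b(X) = \delta_T(X) = X_T$, so $\xi_1(X) = B^{-1}b(X) = X_T/\sqrt{R_X(T,T)}$, and~\eqref{E:CP_5} becomes
\[ X^{(\delta_T)}_s = X_s - \xi_1(X)(ue_1)(s) = X_s - \frac{R_X(s,T)}{R_X(T,T)} X_T. \]
Equivalently, one may normalize $\delta_T$ to $\hat a_1 = \delta_T/\sqrt{R_X(T,T)}$ so that $\hat I^1_T = X_T/\sqrt{R_X(T,T)}$ is standard normal and invoke Proposition~\ref{P:ANTI} directly.

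I do not expect a genuine obstacle here: the statement is a direct specialization, and the only real content is the bookkeeping that converts the abstract inner products $\la u^*\delta_s, u^*\delta_t\ra$ into covariances via~\eqref{E:CP_4}. The one point requiring a word of care is the nondegeneracy assumption $R_X(T,T) > 0$, which guarantees that $H_{(A)}$ is genuinely one-dimensional and that the divisions above are legitimate.
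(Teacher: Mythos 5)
Your proposal is correct and follows essentially the same route as the paper: identify $H_{(\delta_T)} = \setspan\{u^*\delta_T\}$ via~\eqref{E:CONDITIONS}, compute $\|u^*\delta_T\|^2 = R_X(T,T)$ and $(uu^*\delta_T)(s) = R_X(s,T)$ from~\eqref{E:CP_4}, and specialize Proposition~\ref{P:COND_COVARIANCE} and Theorem~\ref{T:ANTI}. If anything, you are slightly more complete than the paper, which leaves the Theorem~\ref{T:ANTI} computation for the anticipative representation implicit and does not remark on the nondegeneracy assumption $R_X(T,T)>0$.
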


\begin{proof}
  Let $X$ be generated by the linear and bounded operator $u:H \rightarrow C([0,T])$ and let $(e_i)_{i=1}^\infty$ be an orthonormal basis in the separable Hilbert space $H$. By~\eqref{E:CONDITIONS}, the detached Hilbert space $H_{(\delta_T)}$ with respect to the condition $\delta_T$ is spanned by
  \[ u^*\delta_T = \sum_{i=1}^\infty \la u^*\delta_T, e_i \ra e_i = \sum_{i=1}^\infty (u e_i)(T) e_i. \]
  By Parseval's identity and~\eqref{E:CP_4},
  \[ \|u^*\delta_T\|^2 = \sum_{i=1}^\infty |\la u^*\delta_T, e_i \ra|^2 = \sum_{i=1}^\infty (ue_i)(T) (ue_i)(T) = R_X(T,T). \]
  Hence, by Proposition~\ref{P:COND_COVARIANCE} in the first line and~\eqref{E:CP_4} in the second line
  \al{ R_{X^{(\delta_T)}}(s,t) &= R_X(s, t) - \frac{\sum_{i=1}^\infty (ue_i)(T)(ue_i)(s) \sum_{i=1}^\infty (ue_i)(T)(ue_i)(t)}{R_X(T,T)} \\
                               &= R_X(s,t) - \frac{R_X(s,T) R_X(t,T)}{R_X(T,T)}. }
  The anticipative representation of $X^{(\delta_T)}$ follows by Theorem~\ref{T:ANTI}.
\end{proof}


\section*{Acknowledgments}

The author would like to thank Ingemar Kaj and Svante Janson for valuable comments.


\bibliographystyle{plain}
\bibliography{bibl}


\end{document}